\documentclass[12pt]{amsart}
\textheight23.8cm \textwidth15.9cm
\topmargin-0.2cm
\evensidemargin.2cm \oddsidemargin.2cm
\numberwithin{equation}{section}
\usepackage[dvips]{graphicx}
\usepackage{amssymb}
\usepackage{amsmath}
\usepackage{latexsym}
\newcommand{\eps}{\varepsilon}
\def\R{\mathbb R}
\def\C{\mathbb C}
\def\Z{\mathbb Z}

\def\N{\mathbb N}
\def\im{\operatorname{Im}}
\def\re{\operatorname{Re}}
\def\dim{\operatorname{dim}}

\def\dist{\operatorname{dist}}

\def\length{\operatorname{length}}
\def\diam{\operatorname{diam}}
\def\area{\operatorname{area}}

\newtheorem{la}{Lemma}[section]
\newtheorem{thm}{Theorem}[section]

\theoremstyle{definition}
\newtheorem{defin}{Definition}[section]
\theoremstyle{remark}
\newtheorem*{rem}{Remark}

\title[Hausdorff dimension of Julia sets]{The growth rate of an entire function and the Hausdorff
dimension of its Julia set} \subjclass{37F10 (primary), 30D05,
30D15 (secondary)}
\thanks{All three authors were supported by the EU Research Training
Network CODY. The first author was also supported by the G.I.F.,
the German--Israeli Foundation for Scientific Research and
Development, Grant G-809-234.6/2003 and the ESF Research
Networking Programme HCAA. The second author was also supported
by Polish MNiSW Grant N N201 0234 33 and PW Grant 504G 1120 0011
000. The latter grant supported a visit of the first and third
authors to Warsaw, during which most of the work for this paper
was carried out}
\author{Walter Bergweiler}
\address{Mathematisches Seminar,
Christian--Albrechts--Universit\"at zu Kiel,
Lude\-wig--Meyn--Str.~4,
D--24098 Kiel,
Germany}
\email{bergweiler@math.uni-kiel.de}
\author{Bogus\l awa Karpi\'nska}
\address{Faculty of Mathematics and Information Science,
Warsaw University of Technology,
Pl.\ Politechniki 1, 00-661 Warszawa, Poland}
\email{bkarpin@mini.pw.edu.pl}
\author{Gwyneth M.\ Stallard }
\address{Department of Mathematics and Statistics, The Open University, Walton Hall,
Milton Keynes MK7 6AA, United Kingdom}
\email{g.m.stallard@open.ac.uk}
\begin{document}
\begin{abstract}
Let $f$ be a transcendental entire function in the
Eremenko-Lyubich class~$B$. We give a lower bound for the
Hausdorff dimension of the Julia set of $f$ that depends on the
growth of~$f$. This estimate is best possible and is obtained by
proving a more general result concerning the size of the escaping
set of a function with a logarithmic tract.
\end{abstract}
\maketitle
\section{Introduction and main result}

Let $f$ be a transcendental entire function and denote by $f^{n},
n \in \N$, the $n$th iterate of~$f$. The {\it Fatou set}, $F(f)$,
is defined to be the set of points, $z \in \C$, such that
$(f^{n})_{n \in \N}$ forms a normal family in some neighbourhood
of~$z$. The complement, $J(f)$, of $F(f)$ is called the {\it
Julia set} of~$f$. An introduction to the basic properties of
these sets can be found in, for example,~\cite{Ber93}.

The Hausdorff dimension of the Julia set of an entire function
$f$ was first considered by McMullen~\cite{McM} who proved that
$\dim J(f) = 2$ if $f(z) = \lambda e^z$, where $\lambda \in \C
\setminus \{0\}$. Taniguchi~\cite{Tan03} extended this result to
functions $f$ of the form
\begin{equation}\label{aa}
f(z) = \int_0^zP(t) e^{Q(t)} dt + c,
\end{equation}
where $P$ and $Q$ are polynomials and $c \in \C$.

There is a close relationship between the Julia set and the {\it
escaping set}
\[
 I(f) = \{z: f^n(z) \to \infty \mbox{ as } n \to \infty \}
\]
which was first studied for a general transcendental entire
function $f$ by Eremenko~\cite{Ere89}. Among other results,
Eremenko proved that the Julia set is always equal to the boundary
of the escaping set.

An important role in complex dynamics is played by the
Eremenko-Lyubich class $B$ consisting of all transcendental
entire functions for which the set of critical values and finite
asymptotic values is bounded. This class contains the functions
considered by McMullen and Taniguchi mentioned above. Eremenko
and Lyubich~\cite{EL} proved that, if $f \in B$, then $I(f)
\subset J(f)$. Thus, for such functions, a lower bound for the
size of the Julia set can be obtained by estimating the size of
the escaping set. An alternative method for obtaining a lower
bound for the size of the Julia set of a function in the class
$B$ is given in~\cite{BKZ}.

The goal of this paper is to relate the Hausdorff dimension of
the Julia set of a function in the class $B$ to the growth rate of
the function. Recall that the {\it order} of an entire function
$f$ is defined by
\[
 \rho(f) = \limsup _{r \to \infty} \frac{\log \log M(r,f)}{\log
 r},
\]
where $M(r,f) = \max_{|z|=r} |f(z)|$. An entire function $f$ has
finite order if and only if there exists $\rho(f) \in [0,\infty)$
such that, for each $\eps
> 0$, there exists $r_{\eps}>0$ such that
\[
 |f(z)| \leq \exp\left( |z|^{\rho(f) + \eps} \right) \; \mbox{
 for } |z| > r_{\eps}.
\]
Note that, if $f$ is of the form \eqref{aa}, then the order of $f$
is equal to the degree of~$Q$.

Bara\'nski~\cite{Bar} and Schubert~\cite{Sch07} proved that $\dim
J(f) = 2$ for any function $f$ of finite order in the class~$B$.
The hypothesis that $f$ has finite order cannot be omitted since
it is known~\cite{Sta91} that, for each $\eps>0$, there exists a
function $f \in B$ for which $\dim J(f) < 1 + \eps$. In fact, for
each $d \in (1,2)$ there exists a function $f \in B$ for which
$\dim J(f) = d$; see~\cite{Sta00}. On the other hand, it was
shown in~\cite{Sta96} that $\dim J(f) > 1$ for any $f \in B$.

We now state the main result of the paper. We note that the
examples in~\cite{Sta00} show that this estimate of $\dim J(f)$
is best possible.

\begin{thm} \label{thm1}
Let $f$ be an entire function in the class $B$ and let $q\geq 1$.
Suppose that, for each $\eps>0$, there exists $r_\eps>0$ such that
\begin{equation}\label{a}
|f(z)|\leq \exp\left( \exp\left(\left(\log|z|\right)^{q+\eps}\right)\right)
\quad \text{for } |z|\geq r_\eps.
\end{equation}
Then
\[
\dim J(f)\geq 1+\frac1q.
\]
\end{thm}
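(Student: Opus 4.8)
Since $f\in B$ we have $I(f)\subset J(f)$, so it suffices to produce a subset of $I(f)$ of Hausdorff dimension at least $1+1/q$. I would do this by passing to logarithmic coordinates and running a Cantor-set construction of the kind used by McMullen and by Bara\'nski and Schubert for functions of finite order, while keeping careful track of how the number and the size of the pieces at each stage depend on the growth bound~\eqref{a}; performed for a single logarithmic tract, the same construction yields the more general statement alluded to in the abstract. Concretely, fix $R>0$ so large that $\{|w|>R\}$ contains no critical or asymptotic value of $f$ and that $f^{-1}(\{|w|>R\})\subset\{|z|>R\}$, put $\mathcal T=\exp^{-1}\bigl(f^{-1}(\{|w|>R\})\bigr)$, and let $F\colon\mathcal T\to H:=\{\re w>\log R\}$ be the logarithmic transform, $\exp\circ F=f\circ\exp$. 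Then $\mathcal T$ is $2\pi i$-periodic, $\mathcal T\subset H$, and $F$ maps each component of $\mathcal T$ (a logarithmic tract) conformally onto $H$. If $E\subset\{\zeta:F^{n}(\zeta)\in\mathcal T\text{ for all }n\text{ and }\re F^{n}(\zeta)\to\infty\}$ then $\exp(E)\subset I(f)\subset J(f)$, and, $\exp$ being locally bi-Lipschitz, $\dim\exp(E)=\dim E$; so it is enough to build such an $E$ with $\dim E\ge1+1/q$.

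Next I would extract the geometric content of~\eqref{a}. Write $\ell(t)$ for the total angular measure of $\mathcal T\cap\{\re\zeta=t\}$ in one period, i.e.\ the angular measure of $f^{-1}(\{|w|>R\})$ on $\{|z|=e^{t}\}$, so $0<\ell(t)\le2\pi$. Since $\re F(\zeta)=\log|f(e^{\zeta})|\le\log M(e^{\re\zeta},f)$, \eqref{a} gives directly $\re F(\zeta)\le\exp\bigl((\re\zeta)^{q+\eps}\bigr)$. Conversely, the classical estimate for the growth of a function with a tract in terms of the conformal modulus of the tract yields $\log\log M(e^{t},f)\ge\pi\int_{t_{0}}^{t}ds/\ell(s)+O(1)$ and, pointwise, $\re F(\zeta)\ge\exp\bigl(\pi\int_{t_{0}}^{\re\zeta}ds/\ell(s)+O(1)\bigr)$ when $\zeta$ lies on the core of $\mathcal T$. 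The first estimate together with~\eqref{a} gives $\int_{t_{0}}^{t}ds/\ell(s)\le t^{q+\eps}$ for large $t$; as the integrand is $\ge1/(2\pi)$, a Chebyshev argument shows that for every large $t$ one has $\ell(s)\ge s^{\,1-q-2\eps}$ for all $s$ in a subset of $[t/2,t]$ of relative measure close to $1$. The second estimate and $\ell\le2\pi$ give $\re F(\zeta)\gtrsim e^{\re\zeta/2}$ on the core; in particular $F$ increases real parts super-exponentially.

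The main work is the construction and the ensuing dimension count. By Koebe's theorem for $F$ on a tract, if $\zeta$ is on the core at real part $t$ then $|F'(\zeta)|\asymp\re F(\zeta)/\ell(t)$, so $F$ maps a disc of radius $\asymp\ell(t)$ about $\zeta$ onto a set containing a ``Koebe disc'' $B$ of $H$ of radius $\rho\asymp\re F(\zeta)$ centred at a point of real part $\asymp\re F(\zeta)$. Here $\rho\gg2\pi$, and since $\mathcal T$ is $2\pi i$-periodic and meets every vertical line of large real part, an area count based on the lower bound for $\ell$ shows that $\tfrac12 B$ contains $\gtrsim\rho^{2}/\ell(\rho)$ pairwise separated ``$\ell$-squares'' of $\mathcal T$ (squares of size $\asymp\ell(\rho)$ comfortably inside $\mathcal T$), spread over a definite fraction of $B$. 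Starting from one such square and, at each stage, pulling back such a family of squares by the univalent branch of $F^{-n}$ along the orbit built so far (of bounded distortion, again by Koebe) produces a Moran-type set $E$: a level-$n$ piece has $N_{n}\asymp\rho_{n}^{2}/\ell(\rho_{n})\asymp t_{n+1}^{2}/\ell(t_{n+1})$ children of diameter $\delta_{n+1}\asymp\ell(t_{n+1})\prod_{i=0}^{n}\ell(t_{i})/t_{i+1}$, where $t_{n}$ denotes the real part at stage $n$ and $t_{n+1}\asymp\re F$ at the centre of the $n$-th piece (so $t_{n}\to\infty$ super-exponentially), the children spread over a definite fraction of the level-$n$ piece and separated by a fixed fraction of $\delta_{n+1}$. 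The mass-distribution principle then gives $\dim E\ge\liminf_{n}\log(N_{0}\cdots N_{n-1})/\log(1/\delta_{n})$. Putting $a_{i}=\log t_{i}\ (\to\infty)$ and $b_{i}=\log(1/\ell(t_{i}))\ge0$, one has $\log N_{i}=2a_{i+1}+b_{i+1}+O(1)$ and $\log(1/\delta_{n})=\sum_{i=0}^{n}b_{i}+\sum_{i=1}^{n}a_{i}+O(1)$; since $(a_{i})$ grows at least exponentially the additive $O(n)$ and $O(1)$ errors are negligible, whence $\dim E\ge1+\liminf_{n}\bigl(\sum_{i=1}^{n}a_{i}\bigr)\big/\bigl(\sum_{i=1}^{n}(a_{i}+b_{i})\bigr)$. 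Finally $b_{i}\le(q-1+2\eps)a_{i}$ by the bound $\ell(t_{i})\ge t_{i}^{\,1-q-2\eps}$ (choose the squares at ``good'' real parts), so $\dim E\ge1+1/(q+2\eps)$; letting $\eps\to0$ gives $\dim J(f)\ge1+1/q$.

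The step demanding genuine effort is the construction of the squares and, above all, the lower bound $N_{n}\gtrsim t_{n+1}^{2}/\ell(t_{n+1})$: since \eqref{a} controls only an average of $1/\ell$, one must show that at each stage that many pieces of $\mathcal T$ of the right size really do lie in (a fixed fraction of) the Koebe disc, and that each sits well enough inside $\mathcal T$ for the distortion of $F$ and of the relevant inverse branches to be bounded --- a weaker count, say $N_{n}\gtrsim t_{n+1}$ (which would follow from periodicity alone), would only give $\dim E\ge1/q$. Moreover, because $\delta_{n+1}$ is super-exponentially smaller than $\delta_{n}$, one must know that the children are spread through the parent rather than clustered, for otherwise the scale gap would weaken the mass-distribution estimate; this is exactly what the $2\pi i$-periodicity of $\mathcal T$ ensures. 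The remaining ingredients --- the logarithmic change of variable, the growth estimates above, and the Koebe/distortion bookkeeping --- are standard.
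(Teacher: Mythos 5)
Your global strategy coincides with the paper's: reduce to the escaping set via $I(f)\subset J(f)$, pass to logarithmic coordinates, build a McMullen-type nested Cantor set whose pieces at level $n$ number $\asymp x_n^{p+1}\cdot x_n$ and have size $\asymp x_n^{-p}$ with $p\approx q-1$, and conclude by a mass-distribution estimate. But your route to the key counting lemma is genuinely different, and it is exactly there that the argument has a gap. You propose to control the geometry of the tract via its cross-sectional width $\ell(t)$ and the Ahlfors distortion theorem, deducing $\int dt/\ell(t)\le t^{q+\eps}$ and hence $\ell(s)\ge s^{1-q-2\eps}$ for most $s$, and then to pack $\gtrsim\rho^{2}/\ell(\rho)$ squares of side $\asymp\ell(\rho)$ ``comfortably inside $\mathcal T$'' into half of a Koebe disc by ``an area count''. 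An area count cannot deliver this. The quantity $\ell(t)$ is the \emph{total} measure of the cross-section $\mathcal T\cap\{\re\zeta=t\}$ in one period; for a general logarithmic tract (which may spiral or cross the line $\re\zeta=t$ in many arcs) this cross-section can consist of many short intervals, none of length comparable to $\ell(t)$, so no square of side $\asymp\ell(t)$ need fit in the tract at all. Even when such squares exist, your count requires $1/\ell$ of them per unit real part per period, i.e.\ squares filling essentially the whole cross-section; but a square sitting in a part of the cross-section away from the core has $\re F$ and $|F'|$ far smaller than the values $h(t)$ and $\asymp h(t)/\ell(t)$ your recursion needs, so its $F$-image does not contain a large Koebe disc and the construction cannot be continued from it. Restricting to squares centred on the core gives only $O(1)$ per unit real part per period, hence $N_n\asymp\rho^2$ rather than $\rho^2/\ell(\rho)$, which yields only $\dim\ge 2/q$ --- weaker than $1+1/q$ for every $q>1$. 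This is precisely the step you flag as ``demanding genuine effort'', and the effort proposed does not close it.

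The paper avoids the tract's internal geometry entirely (and states explicitly that, unlike Bara\'nski and Schubert, it does not use Ahlfors' distortion theorem). In place of $\ell(t)$ it works with $h(x)=\max_y\re F(x+iy)$ and bounds $h'(x)/h(x)\le x^{p}$ on a set $L$ of density $1$ by a Borel-type real-variable lemma (Lemma~\ref{la1}); and in place of squares packed into the tract it takes, at each core point $w_k$ with $\re w_k=u_k\in L$ chosen by a Vitali argument, the preimages $V_{k,l}=\varphi_k(S_{k,l})$ of $\asymp r_kx^{p}$ explicit small squares $S_{k,l}$ laid out in the half-plane $H$ near $F(w_k)$. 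These preimages lie in the tract automatically, are comparable to balls of radius $\asymp x^{-p}$ by Koebe, are strung along the core with spacing $\asymp x^{-p}$, and each is mapped by $F$ \emph{onto} an admissible square, so the recursion closes; the ``good real parts'' issue is handled by the admissibility bookkeeping (at least half of the $S_{k,l}$ are admissible). Your mass-distribution arithmetic ($\log N_i=2a_{i+1}+b_{i+1}$, $\log(1/\delta_n)=\sum a_i+\sum b_i$, separation and spreading hypotheses) is sound and parallels the paper's Section~4, but it rests on the unproved packing claim. To repair the proof you would either need a genuine geometric lemma producing the required squares inside an arbitrary logarithmic tract satisfying \eqref{a} --- which I do not believe is available in the form you use it --- or you should replace the tract-width argument by the inverse-branch construction as in Lemma~\ref{la5}.
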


Our result shows that $\dim J(f)=2$ for $f\in B$ not only if $f$
has finite order, but more generally if
\begin{equation}\label{Q}
\limsup_{r\to\infty} \frac{\log\log\log M(r,f)}{\log \log r}=1.
\end{equation}
It thus strengthens the results of Bara\'nski~\cite{Bar} and
Schubert~\cite{Sch07}. Examples of functions in the class $B$ can
be constructed, for example, by using contour integrals;
see~\cite{PS}, \cite{Sta91} and \cite{Sta00}. This technique
yields examples of functions $f \in B$ which have infinite order
but satisfy \eqref{Q}. For any $q>1$, examples of functions $f \in
B$ which have infinite order and satisfy \eqref{a} were
constructed in \cite{RRRS}. These examples have the additional
property that all the path-connected components of $J(f)$ are
points.

Next we note that, if $f\in B$, then $\rho(f) \geq \frac12$. This
observation seems to have appeared first in~\cite{L},~\cite{BE};
see also~\cite[Lemma~3.5]{RS05c}. This implies that a function
$f\in B$ cannot satisfy \eqref{a} for some $q<1$.

Finally we note that the hypothesis \eqref{a} can also be written
in the form
\[
 \limsup_{r \to \infty} \frac{\log \log \log M(r,f)}{\log \log
 r}\leq
 q.
\]
Such limits or, more generally, limits of the form
\[
 \limsup_{r \to \infty} \frac{\log^{i+k}M(r,f)}{\log^kr},
\]
for certain $i,k \geq 0$, have been considered, for example,
in~\cite[Chapter IV]{JV}.

The main tool used by Eremenko and Lyubich to study a function $f$
in the class $B$ was a {\it logarithmic change of variable}.
Choose $R>|f(0)|$ such that $\Delta_R= \{z \in \C: |z| > R\}$
contains no critical values and no asymptotic values of~$f$. Then
every component $D$ of $f^{-1}(\Delta_R)$ is simply connected and
$f:D \to \Delta_R$ is a universal covering. Now let
$H=\{z\in\C:\re z> \log R\}$. The map $\exp: H\to \Delta_R$ is
also a universal covering and so there exists a biholomorphic map
$G:D\to H$ such that $f=\exp\circ G$. We define
$F:\exp^{-1}(D)\to H$ by $F(z)=G(e^z)$ so that $\exp F(z) =
f(e^z)$. We say that $F$ is the function obtained from $f$ by a
logarithmic change of variable.

In many applications of this method it is irrelevant how $f$
behaves outside~$D$, or whether $f$ is even defined outside~$D$.
This leads to the following definition.

\begin{defin} \label{defn1}
Let $D \subset \C$ be an unbounded domain in $\C$ whose boundary
consists of piecewise smooth curves. Suppose that the complement
of $D$ is unbounded. Let $f$ be a complex-valued function whose
domain of definition contains the closure $\overline{D}$ of~$D$.
Then $D$ is called a {\it logarithmic tract} of $f$ if $f$ is
holomorphic in $D$ and continuous in $\overline{D}$ and if there
exists $R>0$ such that $f:D \to \Delta_R$ is a universal covering.
\end{defin}

If $D$ is a logarithmic tract of $f$, then
\[
I(f,D) = \{z \in D: f^n(z) \in D \mbox{ for all } n \in \N \mbox{
and } f^n(z) \to \infty \mbox{ as } n \to \infty\}.
\]
In fact it was shown in~\cite[Theorem 2.4]{Rem2} that $I(f,D)$
always has at least one unbounded component; see also~\cite{BRS}
for a generalisation to the case when $D$ is a direct tract.

Note that, if $f \in B$, then $f$ has a logarithmic tract~$D$.
Clearly $I(f,D) \subset I(f)$ and so $I(f,D) \subset J(f)$ by the
result of Eremenko and Lyubich~\cite{EL} mentioned earlier. Hence
Theorem~\ref{thm1} follows from the following general result.

\begin{thm} \label{thm2}
Let $f$ be a function with a logarithmic tract~$D$. Suppose that,
for each $\eps>0$, there exists $r_\eps>0$ such that \eqref{a}
holds for $z\in D$. Then $\dim I(f,D)\geq 1+1/q$.
\end{thm}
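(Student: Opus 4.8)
The plan is to construct, inside the logarithmic tract $D$, a Cantor-like family of points that never leave $D$ under iteration and escape to infinity, and then to bound from below the Hausdorff dimension of this set. First I would pass to the logarithmic coordinate: writing $F$ for the map obtained from $f$ by the logarithmic change of variable, $F$ is defined on $\exp^{-1}(D)$ and satisfies $\exp F(z)=f(e^z)$, so that $\re F$ behaves like $\log|f|$ on the tract. The growth hypothesis \eqref{a} then translates into an upper bound for $\re F(z)$ in terms of $\re z$; more precisely, $|f(z)|\le \exp(\exp((\log|z|)^{q+\eps}))$ on $D$ becomes, roughly, $\re F(z)\lesssim (\re z + O(1))^{q+\eps}$ for $\re z$ large. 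Crucially, $F$ omits the left half-plane $\{\re w\le \log R\}$, so by the standard Koebe/Schwarz–Pick estimate for universal coverings of a half-plane (the expansion estimate of Eremenko–Lyubich), $|F'(z)|$ is large — comparable to $\re F(z)$ up to a bounded factor — wherever $\re F(z)$ is large. This gives simultaneously strong expansion and a quantitative ceiling on how fast $\re F$ can grow, which is exactly the tension one needs to control a dimension.

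Next I would set up the iterated function system. Fix a base point deep in the tract where $\re z$ is large, choose a sequence of levels $R_n\to\infty$ growing fast enough that orbits starting near level $R_n$ reach level $R_{n+1}$ in one step (using that $\re F(z)\to\infty$ as $\re z\to\infty$ within $D$, which holds because $f\to\infty$ there), and consider the components of $F^{-n}$ of a suitable "square" $Q_{n}$ at level $R_n$ that map by $F^n$ onto a fixed square $Q_0$, staying in $D$ at every step. Because $F$ is a covering of a half-plane, each such component is a roughly-round set of diameter comparable to $\re F(\cdot)/|F'(\cdot)|^{?}$ — one uses distortion (Koebe) to control the shape — and the number of children of a given level-$n$ piece is at least of order (height of $Q_{n+1}$)$/$(separation of preimages), which by the half-plane covering structure is of order $\re F$ at that scale, hence at least of order $R_{n+1}$. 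The contraction ratio from level $n+1$ back to level $n$ is at most $C/|F'|\le C'/\re F \asymp C'/R_{n+1}$. Feeding the branching number $\sim R_{n+1}$ and the contraction $\sim 1/R_{n+1}$ into the standard mass-distribution / Moran-construction lower bound for Hausdorff dimension, the leading term $\log(\text{branching})/\log(1/\text{contraction})$ tends to $1$; the correction that produces the extra $1/q$ comes from the fact that the pieces are not points but have a transverse extension of size $\sim 1$ (the vertical direction in $H$), which contributes an additional independent "dimension" whose weight is governed by how slowly $R_n$ — equivalently $\re z$ — is allowed to grow relative to $\re F(z)\le (\re z)^{q+\eps}$; a direct computation with $R_{n+1}\approx R_n^{q+\eps}$ yields the bound $1+1/(q+\eps)$, and letting $\eps\to0$ gives $\dim I(f,D)\ge 1+1/q$.

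More concretely for the dimension estimate, I would construct a probability measure $\mu$ supported on the limit set, giving each of the (at least $\sim R_{n+1}$) level-$(n+1)$ pieces inside a level-$n$ piece equal conditional mass, and then estimate $\mu(B(x,r))$ for small $r$ by locating the scale $n$ with $\diam Q_n \asymp r$ and counting how many level-$n$ pieces meet $B(x,r)$ — here the transverse size $\sim 1$ of the tract means a ball of radius $r$ can only meet $O(r \cdot (\text{density of pieces}))$ of them in the expanding direction but essentially $O(1)$ across, and carefully bookkeeping the two directions gives $\mu(B(x,r))\le r^{s}$ for every $s<1+1/q$, whence $\dim I(f,D)\ge s$ by the mass distribution principle. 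One must also verify the escaping and trapping conditions: the nested intersection points satisfy $\re F^n(z)\ge R_n\to\infty$, so $f^n(e^z)=\exp F^n(z)\to\infty$ and the orbit stays in $D$ by construction, so the limit set is genuinely contained in $\exp(I(f,D))$, hence $\dim I(f,D)$ equals the dimension of its exponential image up to the bi-Lipschitz exponential, which is harmless on bounded pieces.

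The main obstacle I anticipate is making the branching-versus-contraction accounting uniform enough: the covering $F:D\to\Delta_R$ is a half-plane covering but the tract $D$ itself may be geometrically wild (thin, twisting), so one cannot directly say the preimage pieces are nicely separated round disks. The remedy is to work entirely in the $H$-coordinate where $F$ maps into a fixed half-plane and to use the Koebe distortion theorem on disks of radius comparable to $\dist(z,\partial(\exp^{-1}D))$ — but controlling that distance from below (equivalently, showing the tract is not too thin at the relevant scales) is the delicate point, and it is precisely where the hypothesis \eqref{a}, through the expansion estimate $|F'|\gtrsim \re F$ combined with the ceiling $\re F\lesssim (\re z)^{q+\eps}$, must be invoked to guarantee that the construction can be carried through with the claimed branching. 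Getting the exponent to come out as exactly $1+1/q$ rather than something weaker will require choosing the level sequence $R_n$ optimally against the growth bound, and that optimization is the quantitative heart of the argument.
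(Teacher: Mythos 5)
Your overall strategy --- logarithmic change of variable, a nested Cantor construction whose limit points satisfy $\re F^n\to\infty$, Koebe distortion for inverse branches, the Eremenko--Lyubich expansion estimate $|F'(z)|\geq \re F(z)/(4\pi)$, and the mass distribution principle --- is exactly the shape of the paper's argument. But there is a genuine gap at the quantitative heart, and the bookkeeping as written would not produce the exponent $1+1/q$. First, hypothesis \eqref{a} translates into $\re F(z)\leq \exp\left((\re z)^{q+\eps}\right)$, not $\re F(z)\lesssim (\re z)^{q+\eps}$; your relation ``$R_{n+1}\approx R_n^{q+\eps}$'' drops an exponential (and from below $\re F\geq\exp(c\,\re z)$, so the levels in fact grow tower-like). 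More importantly, the exponent $1/q$ does not come from the pointwise growth bound at all but from its \emph{differentiated} form: setting $h(x)=\max_y\re F(x+iy)$, a Borel-type real-variable lemma (Lemma~\ref{la1}) converts $h(x)\leq\exp(x^{q+\eps})$ into $h'(x)/h(x)\leq x^{p}$ with $p>q-1+\eps$, and only on a set $L$ of density $1$. The quantity $h(x)/h'(x)\geq x^{-p}$ is the Koebe scale at which an inverse branch of $F$ has bounded distortion, hence an upper bound for the size of any piece that is to map bijectively onto a full square at the next level; since the $2\pi i$-translates of such pieces are spaced $2\pi$ apart vertically, the area density of each generation inside its parent is only $\asymp x^{-p}$, and McMullen's density argument then yields $2-p/(p+1)=1+1/(p+1)\to 1+1/q$. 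Your proposal contains no trace of this mechanism: no differentiated growth bound, no exceptional set, and no device (the paper's ``admissible squares'', in which $L$ fills a definite fraction of the horizontal extent) to guarantee that the good set is met at every level of the induction --- which is precisely what is needed to make the branching uniform, the difficulty you correctly flag but do not resolve.

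Your heuristic for where the $1/q$ comes from is also off. The pieces of the construction are roughly round of radius $\asymp x^{-p}$, not sets of ``transverse extension $\sim 1$''; if one could take pieces of constant size, the naive density count would give dimension $2$ for every $f\in B$, which is false --- the obstruction is exactly that $h/h'$ may be as small as $x^{-p}$. Finally, the delicate point is not a lower bound on $\dist(z,\partial\exp^{-1}(D))$, which is never needed: Koebe is applied to the inverse branches $\varphi$ of $F$, univalent on the entire half-plane $H$ because $F$ maps each component of $\exp^{-1}(D)$ bijectively onto $H$, so the distortion control lives upstairs in $H$ rather than downstairs in the possibly wild tract. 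Supplying the differentiated growth estimate and the admissibility bookkeeping is the substance of the proof, and the proposal as it stands does not get there.
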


In order to prove Theorem~\ref{thm2}, we work with the function
$F: \exp^{-1}(D) \to H$ obtained from $f$ by a logarithmic change
of variable. For each $p>q-1$, we construct a set $E_p \subset
\exp^{-1}(D)$ such that
\[
  \re F^n(z) \to \infty \; \mbox{ for } z \in E_p
\]
and
\[
  \dim E_p \geq 1 + \frac{1}{1+p}.
\]
 We note that $\exp(E_p) \subset I(f,D)$. Since $E_p$ and $\exp (E_p)$ have
the same Hausdorff dimension and $p$ can be chosen to be
arbitrarily close to $q-1$, this is sufficient to prove
Theorem~\ref{thm2}.\\

To construct the sets $E_p$, we use a generalisation of the method
used by McMullen in~\cite{McM}. As in~\cite{McM}, the sets $\exp
E_p$ that we construct consist of points that `zip to infinity';
that is, they belong to the set
\[
 Z(f,D) = \left\{ z \in I(f,D): \frac{1}{n} \log \log |f^n(z)| \to \infty
 \mbox{ as } n \to \infty \right\}.
\]
In our situation, however, more sophisticated arguments are needed
to construct such points. The machinery required for this
construction is set up in Section 3 where we introduce the notion
of an `admissible square' -- a square where $F$ grows regularly in a certain sense.
The sets $E_p$ consist of points whose forward iterates
all lie in an admissible square. We estimate the dimensions of the
sets $E_p$ in Section 4. Our calculations are based on ideas
similar to those used by McMullen in~\cite{McM} -- again, more
delicate arguments are needed as we do not have uniform bounds on
the quantities involved.

We note that in contrast to
Bara\'nski~\cite{Bar} and Schubert~\cite{Sch07} we do not use Ahlfors' distortion theorem.

\section{Preliminary lemmas}
We begin with the following lemma about real functions. It is
very similar to~\cite[Lemma~3]{Ber90}, but we include the proof
for completeness.
\begin{la} \label{la1}
Let $\alpha, \beta:[c,\infty)\to \R$ be continuous and increasing.
Suppose that $\beta$ is differentiable, that
$\alpha$ is absolutely continuous, that
$\alpha(x)\leq \beta(x)$, that $\lim_{x\to\infty}\beta(x)=\infty$
and that $\beta'(x)>0$. Define $\psi: [\beta(c),\infty)\to
(0,\infty)$ by
$\psi(t)=\beta'\left(\beta^{-1}(t)\right)=1/(\beta^{-1})'(t)$.
If $K>1$, then
\begin{equation} \label{la1a}
\alpha'(x)\leq K\psi(\alpha(x))
\end{equation}
on a set of $x$-values of lower density at least $(K-1)/K$.
\end{la}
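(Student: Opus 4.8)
The plan is to reduce \eqref{la1a} to a statement about the single monotone function
\[
g(x)=\beta^{-1}(\alpha(x)),
\]
and then apply a Chebyshev-type estimate. The function $g$ is defined for all $x$ with $\alpha(x)\ge\beta(c)$; since $\alpha$ is increasing these $x$ form an interval $[c_0,\infty)$ (if no such $x$ exists the quantity $\psi(\alpha(x))$ in the statement is not even defined, and in the applications one has $\alpha(x)\to\infty$). As a composition of two increasing functions $g$ is increasing, and the hypothesis $\alpha(x)\le\beta(x)$ together with the monotonicity of $\beta^{-1}$ yields the crucial bound
\[
g(x)=\beta^{-1}(\alpha(x))\le\beta^{-1}(\beta(x))=x.
\]
Since $\beta$ is differentiable with $\beta'>0$, the inverse $\beta^{-1}$ is differentiable everywhere with $(\beta^{-1})'(t)=1/\beta'(\beta^{-1}(t))=1/\psi(t)$. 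Hence at every point $x$ where $\alpha$ is differentiable -- that is, for almost every $x$, as $\alpha$ is absolutely continuous -- the ordinary (pointwise) chain rule gives
\[
g'(x)=(\beta^{-1})'(\alpha(x))\,\alpha'(x)=\frac{\alpha'(x)}{\psi(\alpha(x))}.
\]
Thus \eqref{la1a} holds at $x$ exactly when $g'(x)\le K$, and it suffices to show that the bad set $A=\{x\ge c_0:g'(x)>K\}$ has upper density at most $1/K$.

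Here the monotonicity of $g$ does the work. Although we will not need $g$ to be absolutely continuous, the classical theorem on differentiation of monotone functions tells us that $g'$ exists almost everywhere, is measurable and nonnegative, and satisfies $\int_{c_0}^{x}g'(t)\,dt\le g(x)-g(c_0)$ for every $x\ge c_0$. Combining this with $g'>K$ on $A$ and with $g(x)\le x$, we get for every $x\ge c_0$
\[
K\cdot\meas\bigl(A\cap[c_0,x]\bigr)\le\int_{A\cap[c_0,x]}g'(t)\,dt\le\int_{c_0}^{x}g'(t)\,dt\le g(x)-g(c_0)\le x-g(c_0).
\]
Since $\meas(A\cap[c,x])\le(c_0-c)+\bigl(x-g(c_0)\bigr)/K$ and $c_0,g(c_0)$ are fixed finite numbers, dividing by $x-c$ and letting $x\to\infty$ gives $\limsup_{x\to\infty}\meas(A\cap[c,x])/(x-c)\le1/K$. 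Consequently the complement of $A$ in $[c,\infty)$, which up to a set of measure zero is precisely the set on which \eqref{la1a} holds, has lower density at least $1-1/K=(K-1)/K$, as required.

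There is no serious obstacle in this argument; the only points that need a little care are measure-theoretic. Rather than trying to verify that the composition $g=\beta^{-1}\circ\alpha$ is absolutely continuous -- which would require controlling $\beta'$ from below on compact sets -- one uses the pointwise chain rule (valid wherever $\alpha'$ exists, because $\beta^{-1}$ is differentiable everywhere) for the identity $g'(x)=\alpha'(x)/\psi(\alpha(x))$, and the general inequality $\int g'\le g(x)-g(c_0)$ for monotone $g$ for the estimate. The minor possibility that $\alpha(c)<\beta(c)$ is absorbed by passing from $c$ to $c_0$ as above, and this changes nothing about densities. The whole argument is a straightforward variant of \cite[Lemma~3]{Ber90}.
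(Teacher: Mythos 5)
Your argument is correct, and at its core it is the same Chebyshev-type estimate as the paper's: bound the measure of the bad set by integrating $\alpha'/\psi(\alpha)$ and use $\alpha\le\beta$, in the form $\beta^{-1}(\alpha(y))\le y$, to bound that integral by $y$ minus a constant. Where you differ is in how the integral bound is justified. The paper integrates $\alpha'(x)/\psi(\alpha(x))$ over $[c,y]$ and performs the substitution $u=\alpha(x)$ -- this is exactly where the absolute continuity of $\alpha$ is used -- obtaining $\beta^{-1}(\alpha(y))-\beta^{-1}(\alpha(c))\le y-\beta^{-1}(\alpha(c))$. You instead pass immediately to the increasing function $g=\beta^{-1}\circ\alpha$, identify $\alpha'/\psi(\alpha)$ with $g'$ by the pointwise chain rule (legitimate, since $\beta'>0$ makes $\beta^{-1}$ differentiable everywhere), and then invoke Lebesgue's theorem for monotone functions, $\int_{c_0}^{x}g'\le g(x)-g(c_0)$, together with $g(x)\le x$. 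What this buys you is that no change-of-variables formula for absolutely continuous maps needs to be checked, and in fact absolute continuity of $\alpha$ is never really used: monotonicity alone gives a.e.\ differentiability and the one inequality you need, so your version is marginally more general (at the cost of getting an inequality where the paper has an identity, which is all that is required anyway). Your handling of the peripheral issues -- passing from $c$ to $c_0$ so that $\psi(\alpha(x))$ is defined, which does not affect densities, and noting that the null sets where derivatives fail to exist are harmless for lower density -- is careful and correct, and indeed since $\alpha=\beta\circ g$ with $\beta$ everywhere differentiable, the sets where $\alpha'$ and $g'$ exist actually coincide, so even the ``up to measure zero'' caveat could be dropped.
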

Of course, the inequality \eqref{la1a} makes sense only for values of
$x$ where $\alpha$ is differentiable, but
absolutely continuous functions are differentiable almost everywhere. Thus, if $L_K$ denotes
the set where \eqref{la1a} holds, then the points where $\alpha$ is not differentiable are in the complement of $L_K$.
\begin{proof}
For $y>c$ we define
\[
C_y=\{x \in [c,y] : \alpha'(x)>K\psi(\alpha(x))\}.
\]
Then
\begin{eqnarray*}
K\int_{C_y}dx&\leq&\int_{C_y}\frac{\alpha'(x)}{\psi(\alpha(x))}dx\\
&\leq&\int_{c}^y\frac{\alpha'(x)}{\psi(\alpha(x))}dx\\
&=&\int_{\alpha(c)}^{\alpha(y)}\frac{du}{\psi(u)}\\
&=&\beta^{-1}(\alpha(y))-\beta^{-1}(\alpha(c))\\
&\leq&y-\beta^{-1}(\alpha(c)),
\end{eqnarray*}
and we deduce that the set of $x$-values where $\alpha'(x)>K\psi(\alpha(x))$
has upper density at most $1/K$. The conclusion follows.
\end{proof}
We next recall the following classical result. Inequalities
\eqref{la2a} and \eqref{la2b} are Koebe's distortion theorem and
\eqref{la2c} is Koebe's $\tfrac{1}{4}$-theorem. Here, and
throughout the paper, $B(a,r)$ denotes the open disk around $a$ of
radius~$r$.
\begin{la} \label{la2}
Let $g:B(a,r)\to\C$ be univalent, $\rho \in (0,1)$ and
$z\in B(a,\rho r)$. Then
\begin{equation}\label{la2a}
\frac{\rho}{(1+\rho)^2}
|g'(a)|r
\leq |g(z)-g(a)|
\leq
\frac{\rho}{(1-\rho)^2}
|g'(a)|r ,
\end{equation}
\begin{equation}\label{la2b}
\frac{1-\rho}{(1+\rho)^3}
|g'(a)|
\leq |g'(z)|
\leq
\frac{1+\rho}{(1-\rho)^3}
|g'(a)|
\end{equation}
and
\begin{equation}\label{la2c}
g(B(a,r))\supset
B\left(g(a),\tfrac14 |g'(a)|r\right).
\end{equation}
\end{la}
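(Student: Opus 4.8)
The plan is to reduce Lemma~\ref{la2} to the classical normalized form of Koebe's distortion theorem and Koebe's $\tfrac14$-theorem on the unit disk, and then transport the estimates to $B(a,r)$ by an affine change of variable.

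First I would record that a univalent map has nonvanishing derivative, so $g'(a)\ne 0$, and introduce
$$h(w)=\frac{g(a+rw)-g(a)}{r\,g'(a)},\qquad w\in B(0,1).$$
As $w$ runs over the unit disk, $a+rw$ runs over $B(a,r)$, so $h$ is univalent on $B(0,1)$ with $h(0)=0$ and $h'(0)=1$; that is, $h$ lies in the standard class of schlicht functions. For such $h$ the classical theory (via the area theorem and Bieberbach's inequality $|a_2|\le 2$) gives, for every $w$ with $|w|=s<1$,
$$\frac{s}{(1+s)^2}\le |h(w)|\le\frac{s}{(1-s)^2},\qquad \frac{1-s}{(1+s)^3}\le |h'(w)|\le\frac{1+s}{(1-s)^3},$$
together with $h(B(0,1))\supset B(0,\tfrac14)$.

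Next I would undo the substitution. With $w=(z-a)/r$ one has $|h(w)|=|g(z)-g(a)|/(r|g'(a)|)$ and $h'(w)=g'(z)/g'(a)$, so the three displayed estimates of the lemma are exactly the classical ones rewritten. To pass from statements on the circle $|w|=s$ to ones valid for all $z\in B(a,\rho r)$ it suffices to note that the functions $s\mapsto s/(1\pm s)^2$ and $s\mapsto(1\pm s)/(1\mp s)^3$ are monotone increasing on $[0,1)$; this lets one replace $s=|z-a|/r$ by $\rho$ uniformly (for the lower bound in \eqref{la2a} one takes the extremal point with $|z-a|=\rho r$, or equivalently runs the same argument on the disk $B(a,\rho r)$). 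Finally $h(B(0,1))\supset B(0,\tfrac14)$ rescales directly to \eqref{la2c}.

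There is essentially no obstacle here: the whole content is the classical one-variable theory of schlicht functions, which we merely quote, and the only point to verify is the harmless monotonicity used above to pass from circles to disks.
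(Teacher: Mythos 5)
Your proof is correct and is the standard one: normalize to the schlicht class and quote the classical growth, distortion, and $\tfrac14$-theorems; the paper itself states Lemma~\ref{la2} without proof as a classical result, so there is nothing to compare. Your parenthetical about the lower bound in \eqref{la2a} is well taken --- as written for all $z\in B(a,\rho r)$ that inequality fails for $z$ near $a$ and must be read on the circle $|z-a|=\rho r$ (equivalently, as the covering statement that $g(B(a,\rho r))$ contains the disk of radius $\tfrac{\rho}{(1+\rho)^2}|g'(a)|r$ about $g(a)$), which is exactly how the paper uses it, e.g.\ in deriving \eqref{f}.
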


\begin{rem}
If $\rho = \frac12$ then \eqref{la2b} takes the form
\[
 \tfrac{4}{27} |g'(a)| \leq |g'(z)| \leq 12 |g'(a)|
\]
and so, if $z,w \in B(a,\frac{1}{2}r)$,
\[
 \frac{1}{81} \leq \left| \frac{g'(z)}{g'(w)} \right| \leq 81.
\]
\end{rem}

The following result is a
simple consequence of Koebe's distortion theorem.
\begin{la} \label{la3}
Let $g:B(a,r)\to\C$ be univalent, $\rho\in\left(0,\frac12\right)$
and $z,w\in B(a,\rho r)$. Then
\[
|g(z)-g(w)-g'(a)(z-w)|\leq 26  |g'(a)| \rho |z-w|.
\]
\end{la}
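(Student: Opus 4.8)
The plan is to reduce the statement to an estimate controlling how much $g'$ deviates from the constant value $g'(a)$ on the smaller disk $B(a,\rho r)$, and then to obtain that estimate from Koebe's distortion theorem together with the Schwarz lemma. Since $B(a,\rho r)$ is convex and contains both $z$ and $w$, the segment $[w,z]$ lies in $B(a,\rho r)$, so writing $g(z)-g(w)=\int_0^1 g'(w+t(z-w))(z-w)\,dt$ and $g'(a)(z-w)=\int_0^1 g'(a)(z-w)\,dt$ I would obtain
\[
g(z)-g(w)-g'(a)(z-w)=\int_0^1\bigl(g'(w+t(z-w))-g'(a)\bigr)(z-w)\,dt,
\]
and hence
\[
|g(z)-g(w)-g'(a)(z-w)|\leq |z-w|\sup_{\zeta\in B(a,\rho r)}|g'(\zeta)-g'(a)|.
\]
Thus it suffices to show that $|g'(\zeta)-g'(a)|\leq 26\rho|g'(a)|$ for every $\zeta\in B(a,\rho r)$.

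To prove this, I would consider $G(u)=g'(a+ru)/g'(a)$, which is holomorphic on the unit disk and satisfies $G(0)=1$. The distortion estimate \eqref{la2b} of Lemma~\ref{la2}, applied with the point $a+ru$ where $|u|\leq\tfrac12$, gives $|G(u)|\leq (1+|u|)/(1-|u|)^3\leq 12$, since $s\mapsto(1+s)/(1-s)^3$ is increasing on $[0,1)$; hence $|G(u)-1|\leq 13$ for $|u|\leq\tfrac12$. Now the map $v\mapsto\bigl(G(v/2)-1\bigr)/13$ is holomorphic on the unit disk, vanishes at the origin, and has modulus at most $1$ there, so the Schwarz lemma yields $|G(v/2)-1|\leq 13|v|$, that is, $|G(u)-1|\leq 26|u|$ for $|u|<\tfrac12$. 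Writing $\zeta=a+ru$ with $|u|=|\zeta-a|/r<\rho<\tfrac12$, this gives $|g'(\zeta)-g'(a)|=|g'(a)|\,|G(u)-1|\leq 26\rho|g'(a)|$, which is exactly what was needed; the constant $26=2\cdot 13$ is thereby accounted for.

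There is no genuine obstacle here; the only point that requires a moment's care is that one cannot simply subtract the two-sided modulus bounds for $|g'|$ furnished by Koebe's distortion theorem, because $g'(\zeta)$ and $g'(a)$ may have very different arguments, so that $|g'(\zeta)-g'(a)|$ is not controlled by $\bigl||g'(\zeta)|-|g'(a)|\bigr|$. The Schwarz-lemma step is precisely what upgrades the crude bound $|G-1|\leq 13$ on $B(0,\tfrac12)$ -- which by itself would only give an estimate independent of $\rho$ -- into the bound $|G(u)-1|\leq 26|u|$ that is linear in $|u|$, and hence in $\rho$.
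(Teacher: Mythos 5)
Your proof is correct and follows essentially the same route as the paper: both establish the crude bound $|g'(\zeta)-g'(a)|\leq 13|g'(a)|$ on $B(a,\tfrac12 r)$ from Koebe's distortion theorem, upgrade it to the linear bound $26|g'(a)|\,|\zeta-a|/r$ via the Schwarz lemma, and then integrate $g'-g'(a)$ along the segment from $w$ to $z$. The only difference is cosmetic (you normalise by $g'(a)$ and rescale the variable before applying Schwarz, while the paper applies it directly to $\zeta\mapsto g'(\zeta)-g'(a)$ on $B(a,\tfrac12 r)$).
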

\begin{proof}
It follows from~\eqref{la2b} that if $\zeta\in B\left(a,\frac12 r\right)$,
then
\[
|g'(\zeta)-g'(a)|
\leq |g'(\zeta)|+|g'(a)|\leq 13 |g'(a)|.
\]
Schwarz's lemma yields
\[
|g'(\zeta)-g'(a)|
\leq 26 |g'(a)| \frac{|\zeta - a|}{r}
\]
for $\zeta\in B\left(a,\frac12 r\right)$.
Hence
\[
|g(z)-g(w)-g'(a)(z-w)| =\left|\int_w^z \left(g'(\zeta)
-g'(a)\right) d\zeta \right| \leq 26  |g'(a)| \rho |z-w|,
 \]
for $z,w\in B(a,\rho r)$.
\end{proof}

We shall also need the following version of Vitali's
lemma~\cite[Lemma 4.8]{Fal03}.
\begin{la} \label{la4}
Let $\left\{B(x_i,r_i):i\in I\right\}$ be a collection of
balls in $\R^n$ whose union is bounded. Then there exists
a finite subset $E$ of $I$ such that
$B(x_i,r_i)\cap B(x_j,r_j)=\emptyset$ for $i,j\in E$, $i\neq j$,
and
\[
\bigcup_{i\in I} B(x_i,r_i)
\subset
\bigcup_{i\in E} B(x_i,4 r_i).
\]
\end{la}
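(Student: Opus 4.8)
The plan is to prove this by the classical greedy construction that underlies the Vitali covering lemma (the ``$5r$-covering lemma''), but with the selection ratio tuned so that the enlargement factor comes out to be exactly~$4$ rather than~$5$. In broad terms one repeatedly picks a ball of the family whose radius is close to the largest still available, discards every ball of the family meeting it, and repeats; the discarded balls are then both small enough and close enough to the chosen ball to be contained in its $4$-fold enlargement.

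First I would note that, since $\bigcup_{i\in I}B(x_i,r_i)$ is bounded, every ball of the family lies in this bounded set, so the radii $r_i$ are bounded; hence $\sup_{i\in J}r_i$ is a finite positive number for every nonempty $J\subset I$ (we may of course assume all $r_i>0$). Then I would select indices recursively: put $I_0=I$, and, having defined a nonempty $I_{k-1}\subset I$, choose $i_k\in I_{k-1}$ with
\[
r_{i_k}>\tfrac23\sup_{i\in I_{k-1}}r_i ,
\]
which is possible by the definition of the supremum, and set
\[
I_k=\{\,i\in I_{k-1}: B(x_i,r_i)\cap B(x_{i_k},r_{i_k})=\emptyset\,\}.
\]
The construction stops if some $I_k$ becomes empty; otherwise it continues indefinitely. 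Let $E$ be the set of chosen indices $i_1,i_2,\dots$. Since $i_{k+1}\in I_k$, the ball $B(x_{i_{k+1}},r_{i_{k+1}})$ is disjoint from all earlier chosen balls, so the balls $\{B(x_i,r_i):i\in E\}$ are pairwise disjoint, which is the first assertion of the lemma.

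Next I would verify the covering inclusion. Fix $i\in I$. If $i$ belonged to every $I_k$, then, since $i\in I_{k-1}$, we would have $r_{i_k}>\tfrac23 r_i>0$ for all $k$; but the disjoint balls $B(x_{i_k},r_{i_k})$ all lie in the bounded set $\bigcup_{i\in I}B(x_i,r_i)$, so $\sum_k r_{i_k}^n<\infty$ and hence $r_{i_k}\to0$, a contradiction. Thus there is a smallest $k$ with $i\notin I_k$, and for this $k$ we have $i\in I_{k-1}$ and $B(x_i,r_i)\cap B(x_{i_k},r_{i_k})\neq\emptyset$. Consequently $r_i\le\sup_{j\in I_{k-1}}r_j<\tfrac32 r_{i_k}$, and for any $y\in B(x_i,r_i)$,
\[
|y-x_{i_k}|\le|y-x_i|+|x_i-x_{i_k}|<r_i+(r_i+r_{i_k})=2r_i+r_{i_k}<4r_{i_k},
\]
so $B(x_i,r_i)\subset B(x_{i_k},4r_{i_k})$. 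This proves $\bigcup_{i\in I}B(x_i,r_i)\subset\bigcup_{i\in E}B(x_i,4r_i)$.

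As for finiteness: when the given collection is finite the process stops after at most $|I|$ steps, so $E$ is finite, and this is the situation in which the lemma will be applied (in general the construction yields a countable $E$, which serves equally well). I do not expect a genuine obstacle here, since the argument is standard; the two points that need a little care are the choice of the selection ratio $\tfrac23$ (so that $2r_i+r_{i_k}<4r_{i_k}$, where the naive ratio $\tfrac12$ would only give the factor~$5$) and the use of the boundedness of $\bigcup_{i\in I}B(x_i,r_i)$, via $r_{i_k}\to0$, to guarantee that every ball of the family meets one of the selected balls.
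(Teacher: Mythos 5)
Your argument is correct, and it is essentially the standard proof of this Vitali-type covering lemma: the paper gives no proof but cites \cite[Lemma 4.8]{Fal03}, and Falconer's proof is exactly this greedy selection. Your choice of selection ratio $\tfrac23$ is the right tuning to obtain the enlargement factor $4$, and your use of the boundedness of the union, via $\sum_k r_{i_k}^n<\infty$ and hence $r_{i_k}\to 0$, is the standard way to see that every ball of the family meets one of the selected balls.

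The one point worth dwelling on is the one you flag yourself: the construction yields in general only a countable $E$, and in fact the lemma is false as literally stated with ``finite''. For instance, in $\R$ the balls $B(1/n,1/(8n))$, $n\in\N$, have bounded union containing points arbitrarily close to $0$, while for any finite $E$ the set $\bigcup_{i\in E}B(x_i,4r_i)$ is bounded away from $0$. Falconer's statement reads ``finite or countable'', and that is how the lemma should be read here; so the defect lies in the statement rather than in your proof. Your parenthetical claim that the lemma is only applied to finite collections is, however, not accurate: in the proof of Lemma 3.5 it is applied to the family of intervals indexed by the (typically uncountable) set $L'$. A countable disjoint subfamily still suffices there, since the inequality $\sum_k r_k\geq\tfrac{1}{24}\length(L')$ survives and one may then truncate to finitely many $u_k$ retaining, say, half of that sum.
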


\section{Admissible squares}
Let $f$  and $D$ be as in Theorem~\ref{thm2}.
We may assume
that $R=1$ in the definition
of the logarithmic tract and that $0\notin D$.
Let $H=\{z\in\C:\re z>0\}$ be the right half-plane and let
$F: \exp^{-1}(D)\to H$
be the function obtained from $f$ by a logarithmic
change of variable, as described in Section 1.
Note that $F$ is $2\pi i$-periodic and the restriction of $F$ to
a component of $\exp^{-1}(D)$ maps this component
bijectively onto~$H$.

We now fix $\eps>0$ and $p>q-1 + \eps$. Recall that we are aiming
to construct a set $E_p \subset \exp^{-1}(D)$ such that
\[
  \re F^n(z) \to \infty \; \mbox{ for } z \in E_p.
\]
In order to do this, we
let $x_0=\inf\{\re z: z\in \exp^{-1}(D)\}$ and consider the function
$h:(x_0,\infty)\to (0,\infty)$ defined by
\[
h(x)=\max_{y\in\R} \re F(x+iy).
\]
Note that $h$ is increasing by the maximum principle. Moreover,
$h$ is convex by analogy to Hadamard's three circles theorem.
Thus $h$ has left and right derivatives at all points. If
$z_x=x+i y_x$ is a point such that $h(x)=\re F(z_x)$, then
$F'(z_x)$ is real and lies between the left and right derivative
of $h$ at~$x$. Except for the countable set $C$ where $h$ is not
differentiable, we thus have
\begin{equation}\label{a1}
h'(x)=F'(z_x).
\end{equation}

We now obtain estimates for the size of $h$ and $h'$.

\begin{la} \label{la4a}
Let $h:(x_0,\infty)\to (0,\infty)$ and the countable set $C$ be
defined as above. Then there exists $x_{\eps} \geq x_0$ and a set
$L \subset (x_0, \infty) \setminus C$ of density $1$ such that
\begin{equation}\label{a2}
h(x)\leq \exp\left(x^{q+\eps}\right)\quad \text{for } x \in
(x_\eps, \infty),
\end{equation}

\begin{equation}\label{c}
\frac{h'(x)}{h(x)}\leq x^p\quad \text{for } x\in L,
\end{equation}

\begin{equation}\label{c1}
\frac{h'(x)}{h(x)}\geq \frac{1}{4\pi} \quad \text{for } x\in
(x_0,\infty) \setminus C,
\end{equation}

\begin{equation}\label{c2}
h(x) \geq \exp\left(\tfrac{1}{13}x\right) \quad \text{for } x\in
(x_\eps,\infty),
\end{equation}
and
\begin{equation}\label{c3}
h'(x) \geq \exp\left(\tfrac{1}{14}x\right) \quad \text{for } x\in
(x_\eps, \infty) \setminus C.
\end{equation}

\end{la}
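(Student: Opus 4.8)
The plan is to derive \eqref{a2}, \eqref{c}, \eqref{c1}, \eqref{c2} and \eqref{c3} in that order, the last two resting on \eqref{c1}; since $x_\eps$ may be enlarged finitely often, I will freely assume $x_\eps>\max(x_0,1)$ whenever convenient. We begin with \eqref{a2}. Since $\exp F(z)=f(e^z)$ on $\exp^{-1}(D)$, we have $\exp(\re F(z))=|f(e^z)|$, and as $z$ ranges over $\{\re z=x\}\cap\exp^{-1}(D)$ the point $e^z$ ranges over $\{w\in D:|w|=e^x\}$; hence $\exp(h(x))=\max\{|f(w)|:w\in D,\ |w|=e^x\}$. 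Applying \eqref{a} with $|w|=e^x$ gives $\exp(h(x))\leq\exp(\exp(x^{q+\eps}))$ once $e^x\geq r_\eps$, which is \eqref{a2}.

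For \eqref{c} I would invoke Lemma~\ref{la1} with $\alpha(x)=\log h(x)$ and $\beta(x)=x^{q+\eps}$ on $[x_\eps,\infty)$. Here $\alpha$ is increasing, and it is absolutely continuous as the logarithm of $h$, which is convex (hence locally Lipschitz) and bounded below by $h(x_\eps)>0$ on $[x_\eps,\infty)$; $\beta$ is differentiable and increasing with $\beta'>0$ and $\beta(x)\to\infty$; and $\alpha\leq\beta$ by \eqref{a2}. Since $\beta^{-1}(t)=t^{1/(q+\eps)}$ one computes $\psi(t)=(q+\eps)\,t^{1-1/(q+\eps)}$, so for each $K>1$ Lemma~\ref{la1} yields
\[
\frac{h'(x)}{h(x)}=\alpha'(x)\leq K(q+\eps)\,\alpha(x)^{1-1/(q+\eps)}\leq K(q+\eps)\,x^{q+\eps-1}
\]
on a set of lower density at least $(K-1)/K$, the last inequality using $\alpha(x)\leq x^{q+\eps}$. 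As $p>q-1+\eps$, we have $K(q+\eps)\,x^{q+\eps-1}\leq x^p$ for all large $x$, so $\{x:h'(x)/h(x)>x^p\}$ has upper density at most $1/K$; letting $K\to\infty$, this set has density $0$, whence the set $L\subset(x_0,\infty)\setminus C$ on which \eqref{c} holds has density $1$.

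For \eqref{c1}, fix $x\notin C$, let $T$ be the component of $\exp^{-1}(D)$ containing $z_x$, and recall that $F$ maps $T$ biholomorphically onto $H$. Since $F$ is $2\pi i$-periodic and injective on $T$, the point $z_x+2\pi i$ does not lie in $T$, so the vertical segment of $T$ through $z_x$ has length at most $2\pi$; as $z_x$ is an interior point of this segment, $\dist(z_x,\partial T)\leq\pi$. Because $\re F(z_x)=h(x)$, the disk $B(F(z_x),h(x))$ lies in $H$; applying Koebe's $\tfrac14$-theorem \eqref{la2c} to the univalent map $(F|_T)^{-1}$ on this disk, its image lies in $T$ and contains $B\big(z_x,\tfrac{h(x)}{4|F'(z_x)|}\big)$, so that $\tfrac{h(x)}{4|F'(z_x)|}\leq\dist(z_x,\partial T)\leq\pi$. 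Since $F'(z_x)=h'(x)$ by \eqref{a1} and this is a positive real number, $F$ being locally injective, we obtain $h'(x)/h(x)\geq 1/(4\pi)$, which is \eqref{c1}.

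Finally, integrating \eqref{c1} over $[x_\eps,x]$ gives $\log h(x)-\log h(x_\eps)\geq (x-x_\eps)/(4\pi)$, so $h(x)\geq h(x_\eps)\exp\big((x-x_\eps)/(4\pi)\big)$, which exceeds $\exp(x/13)$ for all large $x$ because $1/(4\pi)>1/13$; this is \eqref{c2}. Then \eqref{c3} is immediate: $h'(x)\geq h(x)/(4\pi)\geq\exp(x/13)/(4\pi)\geq\exp(x/14)$ for $x$ large. Enlarging $x_\eps$ once more to absorb all the thresholds occurring above completes the proof. I expect the two steps requiring a genuine idea to be the geometric bound $\dist(z_x,\partial T)\leq\pi$ driving the Koebe estimate in \eqref{c1}, and the passage from the lower-density output of Lemma~\ref{la1} to the density-$1$ conclusion in \eqref{c}; the remaining manipulations are routine.
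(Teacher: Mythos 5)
Your proof is correct and follows essentially the same route as the paper: the only cosmetic difference is that you apply Lemma~\ref{la1} to $\alpha=\log h$, $\beta(x)=x^{q+\eps}$ rather than to $\alpha=h$, $\beta(x)=\exp\left(x^{q+\eps}\right)$ (which yields the identical bound $h'/h\leq K(q+\eps)x^{q+\eps-1}$ on a set of lower density $(K-1)/K$), and you make explicit the passage to density $1$ by letting $K\to\infty$, a step the paper leaves implicit. One small point in your proof of \eqref{c1}: from $z_x+2\pi i\notin T$ alone the maximal vertical segment could still extend almost $2\pi$ both upwards and downwards, so to conclude that its length is at most $2\pi$ (hence $\dist(z_x,\partial T)\leq\pi$) you should apply the periodicity and injectivity argument to every point of the segment (no two points of $T$ differ by $2\pi i$), which is exactly how the paper's formulation that $\exp^{-1}(D)$ contains no disk of radius greater than $\pi$ is justified.
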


\begin{proof}
The upper bound \eqref{a2} for $h$ follows directly from
hypothesis \eqref{a}.

To obtain an estimate for $h'$ we note that it follows
from~\eqref{a2} that we can apply Lemma~\ref{la1} with
$\alpha(x)=h(x)$ and $\beta(x)=\exp\left(x^{q+\eps}\right)$. We
have $\beta^{-1}(t)=(\log t)^{1/(q+\eps)}$, $\beta'(x)=(q+\eps)
\beta(x)x^{q+\eps-1}$ and
\[
\psi(t)=(q+\eps) t \left(\log t\right)^{\frac{q+\eps-1}{q+\eps}}
\]
so that
\begin{equation}\label{b}
h'(x)\leq K (q+\eps) h(x)\left(\log h(x)\right)^{\frac{q+\eps-1}{q+\eps}}
\leq  K (q+\eps) h(x) x^{q+\eps-1}
\end{equation}
on a set of lower density at least $(K-1)/K$. Since $p>q-1+
\eps$, the right hand side of~\eqref{b} is smaller than $h(x)x^p$
for large~$x$, if $K>1$ is fixed. The upper bound \eqref{c} for
$h'/h$ now follows.

Now recall that  $z_x=x+i y_x$ is a  point such that $h(x)=\re
F(z_x)$. It follows from Koebe's $\tfrac{1}{4}$-theorem
\eqref{la2c} and from \eqref{a1} that if $\varphi$ is the branch
of $F^{-1}$ that maps $F(z_x)$ to $z_x$, then $\varphi(B(F(z_x),
h(x))$ contains a disk around $z_x$ of radius $r$, where
\[
r=\frac{h(x)\varphi'(F(z_x))}{4}= \frac{h(x)}{4F'(z_x)} =
\frac{h(x)}{4h'(x)} \quad \text {for } x \in (x_0, \infty)
\setminus C.
\]
 On the other hand, $\varphi(B(F(z_x), h(x))\subset
\exp^{-1}(D)$ and $\exp^{-1}(D)$ does not contain disks of radius
greater than~$\pi$. The lower bound \eqref{c1} for $h'/h$ now
follows. Integrating \eqref{c1} and noting that $4\pi<13$, we
obtain \eqref{c2}. The lower bound \eqref{c3} for $h'$ follows
from \eqref{c1} and \eqref{c2}.
\end{proof}

We are now in a position to define the key idea of an admissible
square.

\begin{defin} \label{defn2}
For $z\in\C$ and $r>0$ we consider the square
\[
S(z,r)=\left\{ \zeta \in\C: |\re \zeta -\re z|\leq r,
|\im  \zeta -\im z|\leq r\right\}.
\]
We call $z$ the {\em centre} of $S(z,r)$. We say that $S(z,r)$ is
{\em admissible} if $100<r<\frac12\re z$ and
\[
\length([\re z-r,\re z+r]\cap L)\geq \tfrac74 r,
\]
where $\length(\cdot)$ denotes the one-dimensional Lebesgue
measure and $L$ is the set of density~$1$ from~\eqref{c}.
\end{defin}

The following result is the main tool that we use in the
construction of the set $E_p$.

\begin{la} \label{la5}
Given $\tau>1$, there exist positive constants $c_0,c_1,c_2,c_3$ with
the following properties:

If $S(z,r)$ is an admissible square and $x=\re z> c_0$, then
there exist $m\in\N$ with $m>c_3 r x^p$, compact subsets
$A_1,A_2,\dots,A_m$ of $S(z,\frac14 r)$ and points
$a_1,a_2,\dots,a_m$ in $S(z, \frac14 r)$ such that $F$ maps $A_j$
bijectively onto an admissible square centred at $F(a_j)$,
\begin{equation}\label{A}
B\left(a_j,\frac{c_1}{x^p}\right) \subset A_j \subset
B\left(a_j,\frac{c_2}{x^p}\right) \subset B\left(a_j,1\right)
\subset S\left(z, \tfrac14 r\right),
\end{equation}
\begin{equation}\label{B}
B\left(a_j,\frac{\tau c_2}{x^p}\right) \subset \left\{ \zeta
\in\C: |\re \zeta -\re z|\leq \tfrac14 r, |\im  \zeta -\im z|\leq
\pi+1 \right\}
\end{equation}
and
\begin{equation}\label{C}
\re F(a_j)\geq \exp\left(\tfrac{1}{15}x\right)\geq c_0
\end{equation}
for $j=1,2,\dots, m$. Moreover,
\begin{equation}\label{D}
\re a_{j+1} > \re a_{j} + \frac{\tau c_2}{x^p}
\end{equation}
for $j=1,2,\dots, m-1$.
\end{la}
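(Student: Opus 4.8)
The plan is to use the function $h$ to locate a long horizontal segment inside the admissible square along which $F'$ is real, large, and well-controlled, and then to tile a sub-band of this segment by disjoint small disks, applying Koebe distortion to each to produce the sets $A_j$ that $F$ maps onto admissible squares. First I would record the consequences of admissibility: since $S(z,r)$ is admissible we have $100<r<\tfrac12\re z$ and $\length([x-r,x+r]\cap L)\ge \tfrac74 r$, where $x=\re z$. In particular, on the shorter interval $I=[x-\tfrac14 r, x+\tfrac14 r]$ the set $L$ still has length at least, say, $\tfrac14 r$ (indeed much more: the deficit outside $L$ on $[x-r,x+r]$ is at most $\tfrac14 r$, so $I\cap L$ has length $\ge \tfrac14 r$). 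For $t\in I$ let $z_t=t+iy_t$ be a point realising $h(t)=\re F(z_t)$; by a vertical translation by a multiple of $2\pi i$ (using $2\pi i$-periodicity of $F$) we may assume $|y_t-\im z|\le \pi$, so that $z_t\in S(z,\tfrac14 r)$ once $r>100$. By \eqref{a1}, for $t\in I\setminus C$ we have $F'(z_t)=h'(t)>0$, and by \eqref{c3}, $h'(t)\ge \exp(\tfrac1{14}x)$ provided $x>x_\eps$; by \eqref{c}, for $t\in I\cap L$ we have $h'(t)\le h(t)t^p\le \exp(x^{q+\eps})\,(2x)^p$ using \eqref{a2} and $t<2x$.

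Next I would fix the branch $\varphi$ of $F^{-1}$ near $F(z_t)$ sending $F(z_t)$ back to $z_t$; by \eqref{c1} and the argument in the proof of Lemma~3.2, $\varphi$ is univalent on a disk $B(F(z_t),h(t))$ whose image contains a disk about $z_t$ of radius $\ge h(t)/(4h'(t))\ge 1/(16\pi)$, and in fact, using that $\exp^{-1}(D)$ contains no disk of radius $>\pi$ together with \eqref{c1}, $\varphi$ extends univalently to a fixed-size neighbourhood of $F(z_t)$ of radius comparable to $h(t)$. Applying Koebe's $\tfrac14$-theorem \eqref{la2c} to $\varphi$ on, say, $B(F(z_t), \tfrac12 h(t))$: its image contains $B(z_t, \tfrac18 h(t)|\varphi'(F(z_t))|)=B(z_t, \tfrac18\, h(t)/h'(t))$. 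Thus $F$ maps a genuine disk about $z_t$ of radius $\asymp h(t)/h'(t)$ biholomorphically onto a region containing $B(F(z_t),\tfrac12 h(t))$, and in particular containing a square centred at $F(z_t)$ of side comparable to $h(t)$; since $h(t)=\re F(z_t)$ is enormous (at least $\exp(\tfrac1{13}x)$ by \eqref{c2}), the pullback of a suitable admissible square $S(F(z_t), r')$ with $r'$ a small fixed fraction of $h(t)$ — admissibility of the target being arranged because $L$ has density $1$, so $[\,\re F(z_t)-r',\re F(z_t)+r'\,]\cap L$ has length $\ge \tfrac74 r'$ for all large $\re F(z_t)$ — is a compact set $A_t\subset S(z,\tfrac14 r)$. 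By Koebe's distortion theorem \eqref{la2a}, \eqref{la2b} applied to $\varphi$ restricted to the half-radius disk, $A_t$ is squeezed between $B(z_t, c_1|\varphi'|r')$ and $B(z_t, c_2|\varphi'|r')$ with $|\varphi'(F(z_t))|=1/h'(t)$, and since for $t\in I\cap L$ we have $1/h'(t)$ bounded below and $r'/h'(t)\asymp r'\cdot$(stuff)$\asymp 1/t^p$ up to constants (here I use $h'(t)\le h(t)t^p$ and choose $r'=\eta\, h(t)$ for a small $\eta$, giving $r'/h'(t)\ge \eta/t^p\ge \eta/(2x)^p$ and similarly an upper bound), we obtain \eqref{A} with the displayed powers of $x$, shrinking $\eta$ so that the outer disk has radius $<1$. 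Finally \eqref{C} follows from $\re F(a_j)=\re F(z_{t_j})$ close to $h(t_j)\ge \exp(\tfrac1{13}x)\ge \exp(\tfrac1{15}x)\ge c_0$.

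For the counting and separation statements \eqref{B}, \eqref{D}: choose the centres $a_j=z_{t_j}$ along a $(\tau c_2/x^p)$-separated subset of $I\cap L$. Since $\length(I\cap L)\ge \tfrac14 r$, we can fit $m\ge \tfrac14 r/(\tau c_2/x^p+o(\cdot))\ge c_3\, r\, x^p$ such points, which gives the bound $m>c_3 r x^p$ and, by construction of the $t_j$ as an increasing sequence with consecutive gaps $>\tau c_2/x^p$, also \eqref{D}. Inclusion \eqref{B} is automatic once $\tau c_2/x^p<1$ (true for $x>c_0$) since then $B(a_j,\tau c_2/x^p)$ is contained in the strip $|\re\zeta-\re z|\le \tfrac14 r$ (as $|t_j-x|\le\tfrac14 r$ and $\tau c_2/x^p<$ small, with a tiny shrinking of the working interval to $[x-\tfrac14 r+1, x+\tfrac14 r-1]$ to absorb the radius) and in $|\im\zeta-\im z|\le \pi+1$ (since $|\im a_j-\im z|\le\pi$ and $\tau c_2/x^p<1$). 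The main obstacle is the bookkeeping in the previous paragraph: one must verify that the target squares $S(F(a_j),r')$ are simultaneously admissible (requiring the density-$1$ set $L$ to capture a $\tfrac74$-fraction of an interval of length $r'\asymp h(t_j)$ — which holds once $\re F(a_j)$ exceeds some threshold, hence the hypothesis $x>c_0$ and conclusion \eqref{C}), that $r'$ satisfies $100<r'<\tfrac12\re F(a_j)$ (clear, as $r'=\eta h(t_j)$ with $\eta$ small and $h(t_j)$ large), and that the Koebe constants $c_1,c_2$ produced are genuinely uniform — independent of the square and of $j$ — which is exactly why we pull back only on the half-radius disk $B(F(z_t),\tfrac12 h(t))$ where \eqref{la2a}–\eqref{la2c} give universal constants.
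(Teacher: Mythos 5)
Your overall strategy (locate maximum points $z_t$ with $F'(z_t)=h'(t)$, pull back admissible squares by the inverse branch, control sizes by Koebe) is the right starting point, but there is a genuine gap at the step where you claim $r'/h'(t)\asymp 1/t^p$ with "similarly an upper bound". With your choice $r'=\eta\,h(t)$ the pullback of the target square has radius comparable to $\eta\,h(t)/h'(t)$, and the only bounds available for this ratio are one-sided in opposite directions: \eqref{c} gives $h(t)/h'(t)\geq t^{-p}$ for $t\in L$, while \eqref{c1} gives $h(t)/h'(t)\leq 4\pi$. There is no upper bound of order $x^{-p}$; for instance $h'/h$ can be of order $1$ (as for $h(x)=e^{x/13}$), in which case your sets $A_t$ have diameter of order $\eta$, not $x^{-p}$, so the inclusion $A_j\subset B(a_j,c_2/x^p)$ in \eqref{A} fails, and with it the separation \eqref{D} and the packing count become inconsistent (you would be placing $\asymp r x^p$ sets of roughly constant diameter at mutual distance $\asymp x^{-p}$ inside $S(z,\tfrac14 r)$). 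This non-uniformity of $h/h'$ is precisely the difficulty the authors flag, and it is why one point per location with a target square of size proportional to $h(t)$ cannot work.

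The paper's proof resolves this in two coupled steps that your proposal is missing. First, the target squares are taken of size $\delta^2\rho_k$ with $\rho_k=h'(u_k)/x^p$, i.e.\ scaled by the local derivative, so that every pullback $V_{k,l}=\varphi_k(S_{k,l})$ has radius uniformly comparable to $\delta^2/x^p$ regardless of how large or small $h(u_k)/h'(u_k)$ is; this is what makes \eqref{A} hold with uniform $c_1,c_2$. Second, since the Koebe disk $W_k$ around $w_k=z_{u_k}$ only has radius comparable to $r_k=h(u_k)/h'(u_k)$ (which may be much larger than $x^{-p}$), one packs $m_k=[\delta r_k x^p]$ such target squares along a horizontal segment through $F(w_k)$, and a Vitali covering argument applied to the intervals $(u-3h(u)/h'(u),\,u+3h(u)/h'(u))$, $u\in L'$, produces disjoint Koebe disks with $\sum_k r_k\geq \tfrac{1}{120}r$, recovering the total count $m\gtrsim rx^p$. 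A further point you gloss over: with the small target squares $S_{k,l}$ one cannot guarantee that all of them are admissible (density $1$ of $L$ does not control intervals of length $\delta^2\rho_k$ individually), and the paper needs a counting argument to show that at least half of the indices $l$ give admissible squares, which suffices. Your version with $r'=\eta h(t)$ avoids that particular issue but at the cost of the fatal size problem above.
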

\begin{proof}
Let
\begin{equation}\label{d4}
L'=\left[x-\tfrac14 r+1,x+\tfrac14 r-1\right]\cap L.
\end{equation}
Since $S(z,r)$ is admissible we have
\[
\length(L')\geq \tfrac14 r -2 \geq \tfrac15 r.
\]
We apply Lemma~\ref{la4} to the intervals
\[
\left( u-3\frac{h(u)}{h'(u)},u+3\frac{h(u)}{h'(u)}\right),\quad u\in L'.
\]
We obtain $u_1,u_2,\dots,u_n\in L'$ such that if we put
\begin{equation} \label{d5}
r_k=\frac{h(u_k)}{h'(u_k)},
\end{equation}
then
\begin{equation}\label{d3}
(u_j-3r_j,u_j+3r_j) \cap (u_k-3r_k,u_k+3r_k) = \emptyset \quad
\text{ for } j \neq k,
\end{equation}
and
\[
L'\subset \bigcup_{k=1}^n (u_k-12 r_k,u_k+12 r_k).
\]
It follows that
\begin{equation}\label{d}
\sum_{k=1}^n r_k \geq \tfrac{1}{24}\length(L') \geq
\tfrac{1}{120}r.
\end{equation}
Now let $w_k=z_{u_k}$; that is, $\re w_k=u_k$ and $h(u_k)=\re
F(w_k)$. Since $F$ is $2\pi i$-periodic, we may choose $w_k$ such
that
\begin{equation}\label{d2}
|\im w_k - \im z|\leq \pi.
\end{equation} We
denote by $\varphi_k$ the branch of the inverse function of $F$
for which $\varphi_k(F(w_k))=w_k$. Then
\begin{equation}\label{d1}
\varphi_k'(F(w_k))=\frac{1}{F'(w_k)}=\frac{1}{h'(u_k)}
\end{equation}
by \eqref{a1} so that
\begin{equation}\label{e}
h(u_k)\varphi_k'(F(w_k))=r_k.
\end{equation}
Let
\[
W_k=\varphi_k\left( S\left(F(w_k),\tfrac14 h(u_k)\right)\right).
\]
Since $\varphi_k$ is univalent in the right half-plane $H$ and
\[
B\left(F(w_k),\tfrac14 h(u_k)\right) \subset
S\left(F(w_k),\tfrac14 h(u_k)\right) \subset
B\left(F(w_k),\tfrac12 h(u_k)\right),
\]
we deduce from Koebe's distortion theorem \eqref{la2a} and
\eqref{e} that
\begin{equation}\label{f}
B\left(w_k,\tfrac{4}{25} r_k \right) \subset W_k  \subset
B\left(w_k,2 r_k\right).
\end{equation}
Now let $\delta$ be a small positive number to be fixed later. We
put
\begin{equation}\label{f2}
m_k=\left[ \delta r_kx^p\right]
\quad \text{and} \quad
\rho_k=\frac{h'(u_k)}{x^p}.
\end{equation}
Note that if $0\leq l\leq m_k$ and $\delta<\frac13$, then, for
large $x$,
\begin{equation}\label{f0}
l\delta \rho_k +\delta^2 \rho_k \leq   m_k \delta \rho_k +
\delta^2 \rho_k \leq 2 \delta^2 r_kx^p  \rho_k =2 \delta^2  h(u_k)
< \tfrac14 h(u_k).
\end{equation}
For $0\leq l\leq m_k$ we now define
\[
v_{k,l}=\varphi_k\left(F(w_k)+l\delta \rho_k\right),
\]
\[
S_{k,l}=S\left(F(w_k)+l\delta \rho_k,\delta^2 \rho_k\right),
\]
\[
V_{k,l}=\varphi_k\left( S_{k,l} \right)
\]
and
\[
J_{k,l}=\left\{ u\in\R: \left| u-\left( h(u_k)+l\delta \rho_k
\right) \right|\leq \delta^2 \rho_k\right\}.
\]
The interval $J_{k,l}$ is thus the projection of $S_{k,l}$ onto
the real axis. If $\delta$ is sufficiently small, the intervals
$J_{k,l}$ are pairwise disjoint, and so the same holds for the
squares $S_{k,l}$. By~\eqref{f0} the squares  $S_{k,l}$ are
contained in $S\left(F(w_k),\frac14 h(u_k)\right)$ and thus
\begin{equation}\label{f3}
v_{k,l}\in V_{k,l}\subset W_k.
\end{equation}

We want to show that $S_{k,l}$ is admissible for at least one
half of the indices~$l$, provided $c_0$ is sufficiently large. In
order to do so, we first note that~\eqref{c3} yields
\[
\delta^2 \rho_k =\frac{\delta^2}{x^p} h'(u_k) \geq
\frac{\delta^2}{x^p} h'\left(\tfrac{1}{2} x\right) \geq
\frac{\delta^2}{x^p}  \exp\left(\tfrac{1}{28} x\right) >  100,
\]
if $c_0$ and hence $x$ is sufficiently large. Also, by \eqref{f0},
\[
\delta^2 \rho_k < \tfrac14  h(u_k) = \tfrac14\re F(w_k).
\]This means that each square $S_{k,l}$ has the size required to
be admissible. Denoting  by $I_k$ the set of all
$l\in\{0,1,\dots,m_k\}$ for which $S_{k,l}$ is admissible, we thus
have
\[
I_k=\{l: \length(J_{k,l} \cap L) \geq  \tfrac74\delta^2 \rho_k \}.
\]
With  $I_k'= \{0,1,\dots,m_k\} \setminus I_k$ we obtain
$\length(J_{k,l} \setminus L)>\frac14\delta^2 \rho_k$ for $k\in
I_k'$. Now suppose that $|I_k|<\frac12 (m_k+1)$ so that
$|I_k'|\geq \frac12 (m_k+1)$. This implies that
\begin{eqnarray*}
\length\left(\left[\tfrac34 h(u_k), \tfrac54 h(u_k) \right]
\setminus L \right)
&\geq&
\sum_{l=0}^{m_k} \length\left(J_{k,l}\setminus L\right)\\
&\geq&
\sum_{l\in I_k'} \length\left(J_{k,l}\setminus L\right)\\
&\geq& \left| I_k' \right| \tfrac14 \delta^2 \rho_k \\
&\geq& \tfrac18 (m_k+1)  \delta^2 \rho_k \\
&\geq& \tfrac18 \delta r_k x^p  \delta^2 \rho_k \\
&=&  \tfrac18 \delta^3 h(u_k).
\end{eqnarray*}
Since $L$ has density $1$,
this is a contradiction if $c_0$ and hence $u_k$
is sufficiently large.
Thus
\begin{equation}\label{f1}
\left| I_k \right| \geq \tfrac12 (m_k+1) \geq \tfrac12\delta r_k
x^p.
\end{equation}

As already mentioned, it follows from~\eqref{f0} that the squares
$S_{k,l}$ are contained in $S\left(F(w_k), \tfrac14 h(u_k)
\right)$ and thus, in particular,
\begin{equation}\label{g1}
F(w_k)+l\delta \rho_k\in S\left(F(w_k), \tfrac14 h(u_k) \right)
\subset B\left(F(w_k), \tfrac12 h(u_k) \right)
\end{equation}
for  $0\leq l\leq m_k$. Koebe's distortion theorem~\eqref{la2b}
now yields
\[
\frac{1}{81} \leq \frac{\left|\varphi_k'\left( F(w_k)+l\delta
\rho_k \right)\right|} {\left|\varphi_k'\left(
F(w_k)\right)\right|} \leq 81
\]
and hence, by Koebe's distortion theorem~\eqref{la2a},
\[
V_{k,l} \subset B\left(v_{k,l} , 4 \left|\varphi_k'\left( F(w_k)+
l\delta \rho_k \right)\right|\delta^2\rho_k  \right) \subset
B\left(v_{k,l} , 324 \left|\varphi_k'\left( F(w_k)
\right)\right|\delta^2\rho_k\right).
\]
Using~\eqref{d1} and the definition of $\rho_k$ in \eqref{f2} we
obtain
\[
V_{k,l} \subset B\left(v_{k,l} , \frac{324\delta^2}{x^p} \right).
\]
Similarly, it follows from Koebe's $\frac14$-theorem~\eqref{la2c}
that
\[
V_{k,l} \supset B\left(v_{k,l} , \frac{\delta^2}{324x^p} \right).
\]
With $c_1=\delta^2/324$ and $c_2=324 \delta^2$ we thus have
\begin{equation}\label{g}
B\left(v_{k,l} , \frac{c_1}{x^p} \right) \subset V_{k,l} \subset
B\left(v_{k,l} , \frac{c_2}{x^p}\right).
\end{equation}
Next we note that it follows from Lemma~\ref{la3} together with
\eqref{g1} and \eqref{f2} that
\begin{eqnarray*}
& &
\left| v_{k,l+1} - v_{k,l} -\frac{\delta}{x^p} \right|\\
&=&
\left| \varphi_k\left( F(w_k)+(l+1)\delta \rho_k \right)
-\varphi_k\left( F(w_k)+l\delta \rho_k \right)
- \varphi_k'\left( F(w_k)\right) \delta \rho_k \right|\\
&\leq&
26 \left|\varphi_k'\left( F(w_k)\right)\right|
\frac{(l+1)\delta\rho_k}{h(u_k)}\delta \rho_k\\
&=&
\frac{26 \delta^2 (l+1)\rho_k^2}{h'(u_k)h(u_k)}\\
&\leq&
 \frac{26 \delta^3 r_k x^p \rho_k^2}{h'(u_k)h(u_k)}\\
&=&
 \frac{26 \delta^3}{x^p}
\end{eqnarray*}
for $0\leq l\leq m_k-1$.
It follows that
\[
\re v_{k,l+1} -\re v_{k,l}  \geq \frac{\delta}{x^p} - \frac{26 \delta^3}{x^p}.
\]
For sufficiently small $\delta$ we have $\delta-26 \delta^3
>324 \tau \delta^2=\tau c_2$.
Hence
\begin{equation}\label{f4}
\re v_{k,l+1} -\re v_{k,l}  \geq \frac{\tau c_2}{x^p}.
\end{equation}
If $k,k'\in\{ 1,2,\dots,n\}$, $k\neq k'$, $l\in I_k$ and $l'\in
I_{k'}$, then $\re  v_{k,l}\in (u_k-2r_k,u_k+2r_k)$ by \eqref{f3}
and \eqref{f}. On the other hand, $\re  v_{k',l'}\notin (u_k-3
r_k,u_k+3 r_k)$ since, by \eqref{d3},
\[
(u_k-3 r_k,u_k+3 r_k)\cap (u_{k'}-3 r_{k'},u_{k'}+3 r_{k'})=\emptyset.
\]
Since $u_k \in L' \subset L$, it follows from \eqref{c} that
\[
\left| \re  v_{k,l} - \re   v_{k',l'}\right| \geq
r_k=\frac{h(u_k)}{h'(u_k)} \geq \frac{1}{u_k^p} \geq
\left(\frac45\right)^p \frac{1}{x^p}
\]
and thus
\begin{equation}\label{f5}
\left| \re  v_{k,l} - \re   v_{k',l'}\right| \geq \frac{\tau
c_2}{x^p}
\end{equation}
if $\delta$ and hence $c_2$ is sufficiently small.

We also note that it follows from Koebe's distortion
theorem~\eqref{la2a} together with \eqref{d1}, \eqref{f2},
\eqref{d5} and \eqref{c1} that
\begin{eqnarray*}
\left| v_{k,l} -w_k\right| &=& \left| \varphi_k\left(
F(w_k)+l\delta \rho_k \right)
- \varphi_k\left( F(w_k)\right)\right|\\
&\leq&
\left| \varphi_k'\left( F(w_k)\right)\right| 4 l \delta \rho_k\\
&\leq&
\frac{4 m_k\delta \rho_k}{h'(u_k)}\\
&\leq&
\frac{4 \delta^2 r_k x^p \rho_k}{h'(u_k)}\\
&=&
4 \delta^2 r_k \\
&\leq& 16\pi \delta^2.
\end{eqnarray*}
For small $\delta$ we thus have
\[
B\left(v_{k,l},\frac{\tau c_2}{x^p}\right) \subset B(w_k,1).
\]
Recall that $\re w_k = u_k \in L'$ and so it follows from
\eqref{d4} and \eqref{d2} that
\[
w_k\in  \left\{ \zeta \in\C: |\re \zeta -\re z|\leq \tfrac14 r-1,
|\im  \zeta -\im z|\leq \pi\right\}.
\]
Thus
\begin{equation}\label{B1}
B\left(v_{k,l},\frac{\tau c_2}{x^p}\right) \subset \left\{ \zeta
\in\C: |\re \zeta -\re z|\leq \tfrac14 r, |\im  \zeta -\im z|\leq
\pi+1 \right\}.
\end{equation}

Finally we note that it follows from \eqref{f3} that
\[
F(v_{k,l})\in F(V_{k,l}) \subset F(W_k) =  S\left(F(w_k),\tfrac14
h(u_k)\right). \]
 Also, since $u_k \in L'$, it follows from
\eqref{d4} that
\[
\re F(v_{k,l})\geq \re F(w_k)-\tfrac14 h(u_k)=\tfrac34 h(u_k)
\geq \tfrac34 h\left(x-\tfrac14 r\right)\geq \tfrac34
h\left(\tfrac78 x\right).
\]
Using \eqref{c2} we obtain
\begin{equation}\label{h1}
\re F(v_{k,l}) \geq \exp\left( \tfrac{1}{15}x\right),
\end{equation}
if $c_0$ and hence $x$ is sufficiently large.

We now put
\[
X=\left\{ v_{k,l}: k\in \{1,2,\dots,n\} , l\in I_k\right\}.
\]
Then
\[
m=|X| =\sum_{k=1}^n |I_k| \geq \tfrac12 \delta x^p \sum_{k=1}^n
r_k \geq \tfrac{1}{240}\delta x^p r
\]
by~\eqref{f1} and ~\eqref{d}. Thus $m\geq c_3  x^p r$ for
$c_3=\delta/240$. By \eqref{f4} and \eqref{f5} we can write
$X=\{a_1,a_2,\dots,a_m\}$ with $\re a_1<\re a_2<\dots <\re a_m$
and, putting $A_j=V_{k,l}$ if $a_j=v_{k,l}$, we deduce from
\eqref{g}, \eqref{f4}, \eqref{f5}, \eqref{B1} and \eqref{h1}
that, if $\delta$ is chosen to be sufficiently small, then
\eqref{A}, \eqref{B}, \eqref{C} and~\eqref{D} hold. Finally, it
follows from the construction that $F$ maps $A_j$ bijectively
onto an admissible square centred at $F(a_j)$, for
$j=1,2,\dots,m$.
\end{proof}

\section{Proof of Theorem~\ref{thm2}}

We now use Lemma~\ref{la5} to construct the set $E_p$. Let $c_0$
be the constant obtained from Lemma~\ref{la5} for fixed $\tau>1$.
(The condition for $\tau$ will be specified later.) Let $Q_0$ be
an admissible square $S(z_0,r_0)$ such that $\re z_0=x_0> c_0$.
For each $n\in\N\cup\{0\}$ we will define a finite collection
${\mathcal E}_n$ of compact, pairwise disjoint subsets of $Q_0$
with the following properties:
 for each $Q\in{\mathcal E}_n$, the set
$ F^n(Q)$ is an admissible square, each  $Q\in{\mathcal E}_n$
contains at least one element of ${\mathcal E}_{n+1}$ and each
$Q'\in{\mathcal E}_{n+1}$ is contained in a unique $Q\in{\mathcal
E}_n$.

 We start by putting ${\mathcal E}_{0}=\{Q_0\}$. Now suppose
that ${\mathcal E}_{n}$ has been defined and let $Q\in{\mathcal
E}_{n}$. Let $A_1,\ldots , A_m$ be the sets obtained by applying
Lemma~\ref{la5} to the admissible square $S(z,r)=F^n(Q)$. For
$k\in\Z$ and $j\in\{1,..,m\}$, we put
 $$ A_{j,k}=\{\zeta+2\pi ik: \zeta\in A_j\}.$$
 Now let $\varphi$ be the branch of the inverse function of $F^n$ that maps $S(z,r)$ to~$Q$. We define
 $${\mathcal E}_{n+1}(Q)=\left\{\varphi(A_{j,k}): A_{j,k}\subset S\left(z,\tfrac{1}{4}r\right)\right\}$$
and
$${\mathcal E}_{n+1}=\bigcup_{Q\in{\mathcal E}_n}{\mathcal E}_{n+1}(Q).$$
Then ${\mathcal E}_{n+1}$ has the required properties.

We define
$$\overline{{\mathcal E}}_n=\bigcup_{Q\in{\mathcal E}_n}Q$$
and
$$E_p=\bigcap_{n=0}^\infty \overline{{\mathcal E}}_n.$$
It follows from the construction and \eqref{C} that, for each
$z\in E_p$,
$$\lim_{n\to\infty}\re F^n(z)=\infty$$
as required. In fact it follows that $\exp(z)$ belongs to the set
$Z(f,D)$ defined in Section 1.

We estimate the Hausdorff dimension of $E_p$ using the following
result which is part of Frostman's Lemma; see, for
example,~\cite[Proposition 4.9]{Fal03}.
\begin{la} \label{la6}
Let $E$ be a compact subset of $\C$. Suppose that there exist a
probability measure $\mu$ supported on $E$ and positive constants
$c, \tilde{r}$ and $t$ such that, for each $z\in E$ and each $r
\in (0,\tilde{r})$,
$$\mu(B(z,r))\leq cr^t.$$
Then $\dim(E)\geq t$.
\end{la}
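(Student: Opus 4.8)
This is the standard mass distribution principle, one direction of Frostman's lemma, so the plan is short. I would in fact prove the quantitative refinement $\mathcal H^t(E)\ge 1/c$; this gives $\dim E\ge t$ immediately, since by the definition of Hausdorff dimension $\mathcal H^t(E)>0$ forces $\dim E\ge t$. Recall that $\mathcal H^t(E)=\sup_{\eta>0}\mathcal H^t_\eta(E)$, where $\mathcal H^t_\eta(E)$ is the infimum of $\sum_i(\diam U_i)^t$ over all countable covers $\{U_i\}$ of $E$ by sets of diameter at most $\eta$. Hence it suffices to prove that $\sum_i(\diam U_i)^t\ge 1/c$ for every countable cover of $E$ of mesh less than $\tilde r$: this yields $\mathcal H^t_\eta(E)\ge 1/c$ for all $\eta\in(0,\tilde r)$ and therefore $\mathcal H^t(E)\ge 1/c$.

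To prove that bound, fix such a cover, discard any $U_i$ with $U_i\cap E=\emptyset$, and for each remaining $U_i$ choose a point $z_i\in U_i\cap E$. Every point of $U_i$ lies within distance $\diam U_i$ of $z_i$, so $U_i\subset\overline B(z_i,\diam U_i)\subset B(z_i,s)$ for each $s>\diam U_i$; by monotonicity of $\mu$ and the hypothesis,
\begin{align*}
\mu(U_i\cap E)
&\le\mu\bigl(\overline B(z_i,\diam U_i)\bigr)
\le\inf_{\diam U_i<s<\tilde r}\mu\bigl(B(z_i,s)\bigr)\\
&\le\inf_{\diam U_i<s<\tilde r}c\,s^t
=c\,(\diam U_i)^t
\end{align*}
(the case $\diam U_i=0$ being included, and then forcing $\mu(U_i\cap E)=0$). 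Summing over $i$ and using that $\{U_i\}$ covers $E$, that $\mu$ is countably subadditive, and that $\mu(E)=1$, we obtain
\[
1=\mu(E)\le\sum_i\mu(U_i\cap E)\le c\sum_i(\diam U_i)^t,
\]
which is exactly the required estimate.

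I do not expect any genuine obstacle: the argument is entirely elementary. The only point requiring a little care is that the balls in the hypothesis are open, whereas the natural set containing a covering piece $U_i$ is the \emph{closed} ball of radius $\diam U_i$ about $z_i$; this is dealt with by the estimate $\mu(\overline B(z_i,\diam U_i))\le\inf_{s>\diam U_i}\mu(B(z_i,s))$, which also conveniently absorbs the degenerate case $\diam U_i=0$. It is worth noting that the compactness of $E$ is not used in this direction; it is relevant only for the existence half of Frostman's lemma, which plays no role here since the measure $\mu$ is given by hypothesis.
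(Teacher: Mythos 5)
Your proof is correct; it is the standard mass distribution principle argument, including the right handling of the open-ball versus closed-ball issue and the quantitative conclusion $\mathcal{H}^t(E)\ge 1/c$. The paper does not prove this lemma itself but cites it as part of Frostman's Lemma (Falconer, Proposition 4.9), and your argument is exactly the standard one found there, so there is nothing to compare beyond noting that you have supplied the proof the paper omits.
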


Following \cite{McM} we construct a sequence of probability
measures on $Q_0$. Let $\mu_0$ be the Lebesgue measure on $Q_0$
rescaled so that $\mu_0(Q_0)=1$. Then we construct the measure
$\mu_n$ supported on $\overline{{\mathcal E}}_n$ inductively.
Suppose that the measure $\mu_{n}$ on $\overline{{\mathcal
E}}_{n}$ has been defined and let $Q_{n} \in {\mathcal E}_{n}$.
The measure $\mu_{n+1}$ is defined as follows.  If $A \subset
\overline{{\mathcal E}}_{n+1}\cap Q_{n}$ then
\begin{equation}\label{measure}
\mu_{n+1}(A)=\frac{\area(Q_{n})}{\sum_{Q\in{\mathcal
E}_{n+1}(Q_{n})}\area(Q)}\mu_{n}(A)
\end{equation}
and, if $A \subset Q_0 \setminus \overline{{\mathcal E}}_{n+1}$,
then
$$\mu_{n+1}(A)=0.$$
Note that
$$\mu_{n+1}(\overline{{\mathcal E}}_{n+1}\cap Q_{n})=\mu_{n}(Q_{n})$$
and that, for every $k\geq n$,
$$\mu_k(Q_n)=\mu_n(Q_n).$$
Thus there exists a unique measure $\mu$ supported on $E_p$ such
that $$\mu(Q_n)=\mu_n(Q_n)$$ for each set $Q_n\in{\mathcal E}_n$
and each $n \in \N$.

We now let $z\in E_p$. Our aim is to estimate $\mu(B(z,r))$ for
$r$ sufficiently small. Let $Q_n(z)$ be the unique element of
${\mathcal E}_n$ that contains~$z$. Then, for each $n\in\N$, we
have $Q_n(z)\subset Q_{n-1}(z)$ and, by construction,
$F^n(Q_n(z))=S(z_n,r_n)$ for some admissible square $S(z_n,r_n)$.

Let $x_n=\re z_n$. Then, by \eqref{C},
 \begin{equation}\label{J}
x_n\geq \exp\left(\tfrac{1}{15}x_{n-1}\right).
\end{equation}
We put
$$d_n(z)=\diam Q_n(z)$$
and denote the density of $\overline{{\mathcal E}}_{n+1}$ in
$Q_n(z)$ by
$$
\Delta_n(z)=\frac{\sum_{Q\in{\mathcal E}_{n+1}(Q_n(z))}\area
Q}{\area Q_n(z)}.
$$
We now estimate the quantities $\Delta_n(z)$ and $d_n(z)$. In
order to do this, we first prove that there is a uniform bound
for the distortion of $F^n$ on each set $Q \in{\mathcal E}_n$.
(Recall that if a function $f$ is univalent on a set $S$ then the
{\it distortion} of $f$ on $S$ is $\sup_{u,v \in S}
\frac{|f'(u)|}{|f'(v)|}$.)

\begin{la} \label{la6a}
There exists $K>0$ such that, if $n \in \N$ and $Q \in{\mathcal
E}_n$ with $F^n(Q) = S(z',r')$, if $\varphi$ is the branch of
$F^{-n}$ that maps $S(z',r')$ to $Q$ and if $\tilde{Q} =
\varphi\left(B(z',\sqrt{2}r')\right)$, then
 \[
\sup_{u,v\in \tilde{Q}}\frac{|(F^n)'(u)|}{|(F^n)'(v)|}<K.
\]
\end{la}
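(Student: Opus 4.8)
The plan is to establish the bounded-distortion estimate by a standard telescoping argument along the orbit, combining the factorisation $(F^n)' = \prod_{i=0}^{n-1} F'(F^i(\cdot))$ with a geometric control of the sizes $r_i$ of the admissible squares $F^i(Q)$. First I would set up notation: write $Q_i = F^i(Q)$ for $i=0,\dots,n$, so that $Q_n = S(z',r')$ and each $Q_i$ is an admissible square, say $Q_i = S(z_i,r_i)$, with $F$ mapping $Q_i$ bijectively onto $Q_{i+1}$. Let $\varphi_i$ be the branch of $F^{-1}$ sending $Q_{i+1}$ to $Q_i$; then the branch $\varphi$ of $F^{-n}$ in the statement is $\varphi_0 \circ \varphi_1 \circ \cdots \circ \varphi_{n-1}$, and $\tilde Q = \varphi(B(z',\sqrt2 r'))$. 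The first key point is that, by the construction in Lemma~\ref{la5}, each $Q_i$ with $i<n$ arises as an $A_j$ (up to a $2\pi i k$ translate) inside $S\!\left(z_{i-1},\tfrac14 r_{i-1}\right)$, and by \eqref{A} it is contained in a ball of radius $1$; in particular $r_i \leq 1$ while $r_{i-1} > 100$, so $r_i \leq \tfrac{1}{100} r_{i-1}$ — actually much smaller, but even the crude factor $\tfrac{1}{100}$ gives the geometric decay $r_i \leq 100^{-(n-i)} r'$ needed below. Correspondingly, at each inverse step the relevant disk $B(z_{i+1},\sqrt2 r_{i+1})$ is comfortably inside the domain $H$ (indeed inside $S(z_{i+1},2r_{i+1}) \subset B(z_{i+1},3r_{i+1})$, and $\re z_{i+1} > 2r_{i+1}$ so this ball lies in the right half-plane on which $\varphi_i$ is univalent).

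The second and main step is the distortion bound for a single inverse branch, followed by its summation. For each $i$, the branch $\varphi_i$ is univalent on $H \supset B(z_{i+1}, 2r_{i+1})$, and on the concentric ball of half the radius (which contains $Q_{i+1}$ and the slightly enlarged $B(z_{i+1},\sqrt2 r_{i+1})$, since $\sqrt2 < 2$) Koebe's distortion theorem \eqref{la2b}, in the form recorded in the Remark after Lemma~\ref{la2}, gives
\[
\sup_{u,v} \frac{|\varphi_i'(u)|}{|\varphi_i'(v)|} \leq 81
\]
for $u,v$ in that ball — but $81$ is a fixed constant, not a product one; the genuinely useful estimate is the $C^1$-Hölder-type control coming from Lemma~\ref{la3} and Schwarz's lemma: for $\zeta, \xi$ in $B(z_{i+1},\rho \cdot 2r_{i+1})$ with $\rho = \sqrt2/2$,
\[
\left| \log|\varphi_i'(\zeta)| - \log|\varphi_i'(\xi)| \right| \leq C \frac{|\zeta-\xi|}{r_{i+1}} \leq C',
\]
uniformly in $i$. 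Here I would pull the estimate through the composition: writing the points $u = \varphi(a)$, $v = \varphi(b)$ with $a,b \in B(z',\sqrt2 r')$, I track the images $u_i = \varphi_i\circ\cdots\circ\varphi_{n-1}(a)$ and $v_i = \varphi_i\circ\cdots\circ\varphi_{n-1}(b)$, so $u_n = a$, $v_n = b$ and $u_0 = u$, $v_0 = v$. Then
\[
\log\frac{|(F^n)'(u)|}{|(F^n)'(v)|}
= \sum_{i=0}^{n-1} \Bigl( \log|F'(u_i)| - \log|F'(v_i)| \Bigr)
= \sum_{i=0}^{n-1} \Bigl( \log|\varphi_i'(u_{i+1})| - \log|\varphi_i'(v_{i+1})| \Bigr),
\]
and each summand is bounded by $C |u_{i+1} - v_{i+1}| / r_{i+1}$. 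The crucial observation is that $u_{i+1}, v_{i+1}$ both lie in $Q_{i+1}$ (enlarged by the harmless factor), so $|u_{i+1} - v_{i+1}| \lesssim r_{i+1}$, giving a uniform bound $C$ per term — which does not sum. To get a convergent series one must exploit contraction: since $r_{i+1} \leq \tfrac{1}{100} r_i$ and, by Koebe applied to $\varphi_i$, the diameter contracts comparably, one in fact gets $|u_{i+1}-v_{i+1}| \leq \text{(const)} \cdot 100^{-(n-1-i)} \cdot \sqrt2\, r'$ while $r_{i+1} \geq r_{i+1}$; more precisely $|u_{i+1}-v_{i+1}|/r_{i+1}$ is bounded by a constant times the ratio of the diameter of the small sub-square at level $i+1$ to the size of the admissible square at level $i+1$, which by \eqref{A} is $O(x_{i}^{-p}/r_{i+1})$ — and chasing these inequalities shows the terms decay at least geometrically. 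Summing the geometric series produces a bound independent of $n$, say $C''$, and then $K = e^{C''}$ works.

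The main obstacle I anticipate is bookkeeping the two competing scales correctly: the inverse branches $\varphi_i$ individually only contract distances by a bounded factor on balls of a fixed relative radius, so one must be careful to apply the distortion/contraction estimate on the \emph{right} concentric sub-ball at each stage — namely, to verify inductively that $u_{i+1}, v_{i+1}$ and the enlarging ball $B(z_{i+1}, \sqrt2 r_{i+1})$ all sit well inside the ball $B(z_{i+1}, \tfrac12 \cdot 2 r_{i+1})$ on which \eqref{la2b} and Lemma~\ref{la3} are valid, using $\sqrt2 < 2$ with room to spare, and that the passage from $S(z_{i+1},\sqrt2 r_{i+1})$ (a square) to the ball $B(z_{i+1},\sqrt2 r_{i+1})$ is accounted for. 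Once the nesting is set up so that at every level the relevant points lie in a fixed fraction of the Koebe disk, the per-step estimate from Lemma~\ref{la3}, combined with the geometric decay of $r_i$ forced by $100 < r_i < \tfrac12 \re z_i$ and \eqref{A}, closes the argument. I would present the induction on $i$ (descending from $n$) as the backbone, invoking Lemma~\ref{la2}, its Remark, and Lemma~\ref{la3} at each step, and conclude by summing the resulting geometric series to produce the uniform constant $K$.
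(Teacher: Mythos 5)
There is a genuine gap, and it originates in your very first step: the sets $Q_i=F^i(Q)$ for $i<n$ are \emph{not} admissible squares, and the scales you attach to them are backwards. In the construction, only $F^n(Q)=S(z',r')$ is an admissible square; for $i<n$ the set $F^i(Q)$ is a small set of diameter at most $2c_2/x_i^p\leq 2$ (a $2\pi ik$-translate of some $A_j$ when $i=n-1$, and an even smaller iterated preimage for $i<n-1$) sitting inside the admissible square $S(z_i,r_i)=F^i(Q^{(i)})$ attached to the \emph{ancestor} $Q^{(i)}\in{\mathcal E}_i$ of $Q$. In particular $F$ does not map one admissible square bijectively onto the next, and the radii $r_i$ of the admissible squares along the orbit do not satisfy $r_i\leq\tfrac1{100}r_{i-1}$: on the contrary, by \eqref{C}/\eqref{J} they grow super-exponentially ($x_{i+1}\geq\exp(x_i/15)$), since $F$ is hugely expanding. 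So the ``geometric decay $r_i\leq 100^{-(n-i)}r'$'' that your summation relies on is false, and the final sentence ``chasing these inequalities shows the terms decay at least geometrically'' is asserted exactly where the proof has to be done. A telescoping argument \emph{can} be made to work, but the correct mechanism is the opposite of the one you describe: the diameters $\diam F^{i+1}(\tilde Q)$ are bounded by an absolute constant for $i+1\leq n-1$, while the Koebe univalence radius available at level $i+1$ is at least comparable to $r_{i+1}>100$ (indeed to $x_{i+1}$, since the inverse branch of $F$ is univalent on all of $H$), so each term of your sum is $O(1/x_{i+1})$ and the series converges uniformly because of \eqref{J}. None of this is in your write-up.

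The paper avoids the summation entirely by splitting $F^n=F\circ F^{n-1}$ into just \emph{two} factors. The distortion of $F$ on $F^{n-1}(\tilde Q)$ is bounded by one application of Koebe \eqref{la2b}, since the relevant inverse branch is univalent on $B(z',2r')$ and one works on the concentric ball of radius $\sqrt2 r'$. The distortion of $F^{n-1}$ on all of $\tilde Q$ is then bounded by the single constant $81$ in one further application of Koebe, because $F^{n-1}(\tilde Q)$ lies in $B(z'',\tfrac12 r'')$ while the branch of $F^{-(n-1)}$ carrying it to $\tilde Q$ is univalent on $B(z'',r'')$ --- precisely the observation that the image of $\tilde Q$ under $F^{n-1}$ is a tiny set well inside a fixed fraction of a single Koebe disk. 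This is the key idea your proposal is missing; with it, no control of the intermediate steps $1,\dots,n-2$ is needed at all.
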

\begin{proof}
Since the branch of $F^{-1}$ that maps $F^n(Q)$ to $F^{n-1}(Q)$
is univalent in $B(z', 2r')$, it follows from Koebe's distortion
theorem~\eqref{la2b} that the distortion of $F$ on
$F^{n-1}(\tilde{Q})$ is bounded by the constant
$$K_1=\frac{(\sqrt{2}+1)^4}{(\sqrt{2}-1)^4}.$$
Also, by construction, there is an admissible square $S(z'',r'')$
such that $F^{n-1}(Q)\subset S(z'',\tfrac{1}{4}r'')$ and
$F^{n-1}(\tilde{Q}) \subset B(z'',\tfrac{1}{2}r'')$. The branch of
$F^{-(n-1)}$ that maps $F^{n-1}(\tilde{Q})$ onto $\tilde{Q}$ is
univalent in $B(z'', r'')$ and so, by Koebe's distortion
theorem~\eqref{la2b}, the distortion of $F^{n-1}$ on $\tilde{Q}$
is bounded by the constant $K_2 = 81$. The result now follows by
putting $K=K_1K_2 < 10^4$.
\end{proof}

We now use the result of Lemma~\ref{la6a} to obtain estimates for
the density $\Delta_n(z)$ and the diameter $d_n(z)$.
\begin{la} \label{la7}
There exists a constant $c_5>0$ such that, for $n=0,1,2, \ldots$,
$$\Delta_n(z)\geq\frac{c_5}{x_n^p}.$$
\end{la}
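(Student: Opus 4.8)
The plan is to apply Lemma~\ref{la5} to the admissible square $S(z_n,r_n)=F^n(Q_n(z))$, to count the resulting elements of $\mathcal{E}_{n+1}(Q_n(z))$ from below, and then to compare areas using the uniform distortion bound for $F^n$ provided by Lemma~\ref{la6a}. Note first that $S(z_n,r_n)$ is admissible by construction and that $x_n\geq c_0$ (by~\eqref{C} when $n\geq1$, and by the choice of $Q_0$ when $n=0$), so Lemma~\ref{la5} applies and produces $m>c_3 r_n x_n^p$ sets $A_1,\dots,A_m\subset S(z_n,\tfrac14 r_n)$ together with points $a_1,\dots,a_m$ satisfying $B(a_j,c_1/x_n^p)\subset A_j\subset B(a_j,c_2/x_n^p)$. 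Writing $\varphi$ for the branch of $F^{-n}$ that carries $S(z_n,r_n)$ onto $Q_n(z)$ and $A_{j,k}=\{\zeta+2\pi ik:\zeta\in A_j\}$, the collection $\mathcal{E}_{n+1}(Q_n(z))$ consists of the pairwise disjoint sets $\varphi(A_{j,k})$ with $A_{j,k}\subset S(z_n,\tfrac14 r_n)$.

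Next I would bound the number of admissible pairs $(j,k)$ from below. By~\eqref{B} the centre satisfies $|\im a_j-\im z_n|\leq\pi+1$, and $A_j\subset B(a_j,c_2/x_n^p)$, so, taking $c_0$ large enough that $c_2/x_n^p<1$, the translate $A_{j,k}$ lies inside $S(z_n,\tfrac14 r_n)$ whenever $2\pi|k|\leq\tfrac14 r_n-\pi-2$. Since $r_n>100$, the number of such integers $k$ is at least $c_4 r_n$ for some absolute constant $c_4>0$, and therefore $|\mathcal{E}_{n+1}(Q_n(z))|\geq m\,c_4 r_n\geq c_3 c_4\,r_n^2\,x_n^p$. (The $2\pi i$-periodicity of $F$ is what guarantees that each $\varphi(A_{j,k})$ still maps to an admissible square under $F^{n+1}$, as is already built into the construction.)

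Finally I would estimate the areas. By Lemma~\ref{la6a} the distortion of $\varphi=F^{-n}$ on $S(z_n,r_n)$ is at most $K$, so with $M_n=\sup_{S(z_n,r_n)}|\varphi'|$ we get $\area Q_n(z)=\int_{S(z_n,r_n)}|\varphi'|^2\leq 4M_n^2 r_n^2$, while for every admissible $(j,k)$, since $A_{j,k}\subset S(z_n,r_n)$,
\[
\area\varphi(A_{j,k})\geq (M_n/K)^2\,\area A_j\geq\frac{M_n^2}{K^2}\cdot\frac{\pi c_1^2}{x_n^{2p}}.
\]
Summing over the at least $c_3 c_4 r_n^2 x_n^p$ such sets and dividing by $\area Q_n(z)$, the factors $r_n^2$ and $M_n^2$ cancel, giving $\Delta_n(z)\geq \pi c_1^2 c_3 c_4/(4K^2 x_n^p)$; thus the lemma holds with $c_5=\pi c_1^2 c_3 c_4/(4K^2)$. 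When $n=0$ the map $\varphi$ is the identity, one may take $K=1$, and the same computation goes through.

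The step I expect to be the main obstacle is the cardinality bound: one must check that, after discarding the $2\pi i$-translates $A_{j,k}$ that do not fit inside $S(z_n,\tfrac14 r_n)$, the number of survivors is still proportional to $r_n$ (rather than merely bounded), so that the factor $r_n^2$ in the numerator can absorb the factor $r_n^2$ coming from $\area Q_n(z)$. This is precisely where the admissibility condition $r_n>100$ and the location estimate~\eqref{B} for the points $a_j$ enter; once this is in place, the area comparison is routine given the uniform distortion bound of Lemma~\ref{la6a}.
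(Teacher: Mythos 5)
Your proposal is correct and follows essentially the same route as the paper: apply Lemma~\ref{la5} to $F^n(Q_n(z))=S(z_n,r_n)$, count the $2\pi i$-translates $A_{j,k}$ fitting in $S(z_n,\tfrac14 r_n)$ (the paper gets $\tfrac{r_n}{4\pi}-2>\tfrac{r_n}{8\pi}$, matching your $c_4 r_n$), and transfer the area estimate back via the distortion bound $K$ of Lemma~\ref{la6a}. The resulting constant $c_5$ agrees with the paper's $c_1^2c_3/(32K^2)$ up to the normalisation of $c_4$.
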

\begin{proof}
It follows from Lemma~\ref{la6a} that
$$
\Delta_n(z)\geq\frac{1}{K^2}\frac{\sum_{Q\in{\mathcal
E}_{n+1}(Q_n(z))}\area F^n(Q)}{\area F^n(Q_n(z))}.$$ By
construction,
$$
F^n\left(\bigcup_{Q\in{\mathcal
E}_{n+1}(Q_n(z))}Q\right)=\bigcup_{A_{j,k}\subset
S(z_n,r_n/4)}A_{j,k},
$$
where $A_{j,k}=\{\zeta+2\pi ik:\zeta\in A_j\}$ and $A_j$ is one of
the sets obtained by applying Lemma~\ref{la5} to $F^n(Q_n(z)) =
S(z_n,r_n)$. Note that there are at least $c_3r_nx_n^p$ such sets
$A_j$ and, by \eqref{A}, each of these sets satisfies
$$\area A_j\geq\pi\frac {c_1^2}{x_n^{2p}}.$$
Also, for each $j$, the set $\left\{k:\ A_{j,k}\subset
S\left(z_n,\tfrac{r_n}{4}\right)\right\}$ has at least
$\frac{r_n}{4\pi}-2$ elements. Since $r_n>100$,
$$ \frac{r_n}{4\pi}-2>\frac{r_n}{8\pi}
$$
and so$$\Delta_n(z)\geq \frac{c_1^2c_3}{32K ^2}\frac{1}{x_n^p}.$$
\end{proof}

\begin{la} \label{la7a}
There exist constants $c_6,c_7>0$ such that, for $n = 0,1,2
\ldots$ and for each $ Q\in{\mathcal E}_{n+1}(Q_n(z))$,
 \begin{equation}\label{M1}
\frac{c_6}{|(F^n)'(z)|x_n^p}\leq\diam
Q\leq\frac{c_7}{|(F^n)'(z)|x_n^p}.
\end{equation}
In particular, for $Q=Q_{n+1}(z)$ we have
\begin{equation}\label{M}
\frac{c_6}{|(F^n)'(z)|x_n^p}\leq
d_{n+1}(z)\leq\frac{c_7}{|(F^n)'(z)|x_n^p}.
\end{equation}
\end{la}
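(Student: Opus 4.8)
The plan is to read the size of $Q$ off the fact that $F^n$ maps it onto the square $A_{j,k}$, which by \eqref{A} is comparable to a disk of radius $1/x_n^p$, and then to transport this estimate back to the reference point $z$ by means of the uniform distortion bound of Lemma~\ref{la6a}. Unwinding the construction: for $Q\in\mathcal E_{n+1}(Q_n(z))$ there are indices $j,k$ with $Q=\varphi(A_{j,k})$, where $\varphi$ is the branch of $F^{-n}$ carrying $S(z_n,r_n)=F^n(Q_n(z))$ onto $Q_n(z)$, where $A_{j,k}=A_j+2\pi ik$, and where $A_j$ and $a_j$ are furnished by Lemma~\ref{la5} applied to the admissible square $S(z_n,r_n)$ (so the $x$ there is $x_n$). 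Put $b=a_j+2\pi ik$; then \eqref{A} gives
\[
B\left(b,\frac{c_1}{x_n^p}\right)\subset A_{j,k}\subset B\left(b,\frac{c_2}{x_n^p}\right),\qquad b\in S\left(z_n,\tfrac14 r_n\right).
\]
As in Lemma~\ref{la6a}, $\varphi$ is univalent on $B(z_n,\sqrt{2}\,r_n)$, hence on a disk about $b$ of radius at least $\tfrac12 r_n$, because $b\in S(z_n,\tfrac14 r_n)$. Since $r_n>100$ while $c_2/x_n^p<1$ once $c_0$, and hence $x_n$, is large enough, the two disks above lie well inside that disk of univalence, so Lemma~\ref{la2} applies to $\varphi$ on them.

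Next I would derive the two-sided bound for $\diam Q$. Koebe's $\tfrac14$-theorem \eqref{la2c} gives $\varphi(B(b,c_1/x_n^p))\supset B(\varphi(b),\tfrac{c_1}{4x_n^p}|\varphi'(b)|)$, whence $\diam Q\ge \tfrac{c_1}{2x_n^p}|\varphi'(b)|$; Koebe's distortion theorem \eqref{la2a}, used with the tiny ratio $(c_2/x_n^p)/(\tfrac12 r_n)$, yields an absolute constant $C$ with $\varphi(B(b,c_2/x_n^p))\subset B(\varphi(b),C\tfrac{c_2}{x_n^p}|\varphi'(b)|)$, whence $\diam Q\le \tfrac{2Cc_2}{x_n^p}|\varphi'(b)|$. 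Thus $\diam Q$ is comparable, with absolute constants, to $|\varphi'(b)|/x_n^p$.

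It remains to replace $|\varphi'(b)|$ by $1/|(F^n)'(z)|$. Since $\varphi$ inverts $F^n$ on this branch, $|\varphi'(b)|=1/|(F^n)'(\varphi(b))|$. Now $\varphi(b)\in Q\subset\varphi(B(z_n,\sqrt{2}\,r_n))=\tilde Q$, and also $z\in Q_{n+1}(z)\subset\tilde Q$, the latter because $F^n(z)\in F^n(Q_{n+1}(z))\subset S(z_n,\tfrac14 r_n)\subset B(z_n,\sqrt{2}\,r_n)$. Hence Lemma~\ref{la6a} gives $|(F^n)'(\varphi(b))|/|(F^n)'(z)|\in(1/K,K)$, so $|\varphi'(b)|$ and $1/|(F^n)'(z)|$ agree up to the factor $K$. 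Feeding this into the previous paragraph yields \eqref{M1} with, say, $c_6=c_1/(2K)$ and $c_7=2CKc_2$; taking $Q=Q_{n+1}(z)$, for which $\diam Q=d_{n+1}(z)$, gives \eqref{M}. (When $n=0$ one has $\varphi=\mathrm{id}$, $(F^0)'\equiv 1$ and $Q=A_{j,k}$, so the claim is immediate from \eqref{A}.)

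I expect the only genuinely delicate point to be the comparison in the last paragraph: one must ensure that $z$ itself, not merely the points of $A_{j,k}$, lies in a single Koebe neighbourhood $\tilde Q$ on which Lemma~\ref{la6a} supplies a distortion bound that is uniform in $n$ — this is exactly what lets one compare the scaling of $\varphi$ near $b$ with its scaling near the fixed reference point~$z$. Everything else is a routine application of the Koebe estimates of Lemma~\ref{la2}, the only minor wrinkle being that $A_{j,k}$ is a square rather than a disk, which is absorbed into the constants via the two inclusions in \eqref{A}.
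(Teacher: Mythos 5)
Your argument is correct and follows essentially the same route as the paper: both proofs sandwich $F^n(Q)=A_{j,k}$ between the balls of radii $c_1/x_n^p$ and $c_2/x_n^p$ from \eqref{A}, transfer this to $Q$ via the derivative of the inverse branch, and then use the uniform distortion bound of Lemma~\ref{la6a} to replace the derivative at a point of $Q$ by $|(F^n)'(z)|$. The paper phrases the middle step with $\sup$ and $\inf$ of $|(F^n)'|$ over $Q$ rather than Koebe at the centre $b$, but this is only a cosmetic difference absorbed into the constants.
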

\begin{proof}
Since $F^n(Q)$ is one of the sets $A_{j,k}$ in $S(z_n,r_n)$, it
follows from \eqref{A} that $ F^n(Q)$ contains a ball of radius
$c_1/x_n^p$ and is contained in a ball of radius $c_2/x_n^p$
which is contained in $S(z_n,r_n)$. Hence
$$\frac{2c_1}{x_n^p}\frac{1}{\sup_{u\in Q}|(F^n)'(u)|}\leq \diam Q\leq
\frac{2c_2}{x_n^p}\frac{1}{\inf_{u\in Q}|(F^n)'(u)|}.$$ It now
follows from Lemma~\ref{la6a} that
$$\frac{2c_1}{K}\frac{1}{x_n^p|(F^n)'(z)|}\leq \diam Q\leq
\frac{2c_2K}{x_n^p|(F^n)'(z)|}.$$
\end{proof}
We now obtain an estimate for the derivative $|(F^n)'(z)|$ in
terms of $x_n$.

\begin{la} \label{la8}
For each $\delta>0$, there exists $n_0>0$ such that, for $n>n_0$,
$$
|(F^n)'(z)|\geq \frac{x_n}{8 \pi}.$$
\end{la}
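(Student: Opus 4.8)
The plan is to expand $(F^n)'(z)$ by the chain rule as $\prod_{k=0}^{n-1}F'(F^k(z))$ and to bound each of the $n$ factors below by about $x_{k+1}/(8\pi)$; since, by~\eqref{J}, $x_j\to\infty$, the product of $n$ such factors eventually dominates $x_n/(8\pi)$.

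The first step is a pointwise lower bound for $|F'|$ valid at every point of $\exp^{-1}(D)$ --- this is exactly the computation behind~\eqref{c1}, but carried out at an arbitrary point rather than at a maximising point~$z_x$. Given $w\in\exp^{-1}(D)$, let $\varphi$ be the branch of $F^{-1}$ on $H$ with $\varphi(F(w))=w$; this exists and is univalent on $H$ because $F$ maps each component of $\exp^{-1}(D)$ bijectively onto~$H$. For every $r<\re F(w)$ we have $B(F(w),r)\subset H$, so Koebe's $\tfrac14$-theorem~\eqref{la2c} gives $\varphi(B(F(w),r))\supset B\!\left(w,\tfrac14|\varphi'(F(w))|\,r\right)$; since $\varphi(B(F(w),r))\subset\exp^{-1}(D)$ and $\exp^{-1}(D)$ contains no disk of radius greater than~$\pi$, we get $\tfrac14|\varphi'(F(w))|\,r\le\pi$, and letting $r\to\re F(w)$ yields
\[
|F'(w)|=\frac{1}{|\varphi'(F(w))|}\geq\frac{\re F(w)}{4\pi}.
\]

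I would then apply this with $w=F^k(z)$ for $k=0,1,\dots,n-1$; each $F^k(z)$ lies in $\exp^{-1}(D)$ because, by construction, the iterates of $z$ stay inside sets produced by Lemma~\ref{la5}. Since $z\in Q_{k+1}(z)$ and $F^{k+1}(Q_{k+1}(z))=S(z_{k+1},r_{k+1})$ is an admissible square, we have $F^{k+1}(z)\in S(z_{k+1},r_{k+1})$, so $\re F^{k+1}(z)\geq x_{k+1}-r_{k+1}>\tfrac12 x_{k+1}$ by the admissibility condition $r_{k+1}<\tfrac12 x_{k+1}$. Hence $|F'(F^k(z))|>x_{k+1}/(8\pi)$, and the chain rule gives
\[
|(F^n)'(z)|=\prod_{k=0}^{n-1}\bigl|F'(F^k(z))\bigr|>\prod_{j=1}^{n}\frac{x_j}{8\pi}=\frac{x_n}{8\pi}\prod_{j=1}^{n-1}\frac{x_j}{8\pi}.
\]
By~\eqref{J} the $x_j$ tend to infinity uniformly (since $x_0$ is fixed and $x_{j+1}\geq\exp(x_j/15)$), so $\prod_{j=1}^{n-1}x_j/(8\pi)\to\infty$; in particular it is at least $1$ for $n$ larger than some $n_0$ (independent of $\delta$ and of $z\in E_p$), and then $|(F^n)'(z)|\geq x_n/(8\pi)$.

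There is no deep obstacle here; the key point is to estimate $(F^n)'$ \emph{one iterate at a time} using the pointwise bound above, rather than globally via the bounded-distortion statement of Lemma~\ref{la6a} --- the latter would only give a lower bound of order $r_n$, which can be much smaller than $x_n$. The remaining work is routine bookkeeping to track which set each $F^k(z)$ belongs to.
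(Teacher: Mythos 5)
Your proposal is correct and is essentially the paper's own argument: the same pointwise bound $|F'(w)|\geq \re F(w)/(4\pi)$ via Koebe's $\tfrac14$-theorem and the width of $\exp^{-1}(D)$, applied factor by factor through the chain rule, with $\re F^n(z)\geq x_n-r_n\geq x_n/2$ from admissibility and the remaining factors absorbed because all $x_j$ are large. Your version just spells out the bookkeeping (which component each $F^k(z)$ lies in, and why the partial product exceeds $1$) a little more explicitly than the paper does.
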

\begin{proof}
It follows from Koebe's $\tfrac{1}{4}$-theorem \eqref{la2c} that
if $\varphi$ is the branch of $F^{-1}$ that maps $F(z)$ to $z$
then
$$\varphi(B(F(z),\re F(z)))\supset B\left(z,\frac{\re F(z)}{4|(F'(z)|}\right).$$
Since $\varphi(B(F(z),\re F(z)))$ contains no vertical segments
of length $2\pi$ we obtain
$$|F'(z)|\geq\frac{\re F(z)}{4\pi}.$$
As $\re F^i(z)$ is much bigger than $4\pi$ for $i=1,\ldots ,n$ and
$\re F^n(z)\geq x_n-r_n\geq x_n/2$, it follows that
$$|(F^n)'(z)|\geq\frac{\re F^n(z)}{4\pi}\geq\frac{x_n}{8\pi}.$$
\end{proof}
  It follows from \eqref{M} and Lemma~\ref{la8}
 that, for large $n$,
\begin{equation}\label{r1}
d_{n+1}(z)\leq\frac{8\pi c_7}{x_n^{p+1}},
\end{equation}
 so $\lim_{n\to\infty}d_n(z)=0$ and
$$
\{z\}=\bigcap_{n=1}^\infty Q_n(z).$$ Since $Q_{n+1}(z) \subset
Q_n(z)$ we have $d_{n+1}(z) \leq d_n(z)$. Thus, for $r$
sufficiently small, there exists a unique $n$ such that
\begin{equation}\label{r}
d_{n+1}(z)\leq r<d_n(z).
\end{equation}

Now fix $\delta\in(0,1)$. We may assume that $r<1$ is small
enough to ensure that $n > n_0$, where $n_0$ is defined as in
Lemma~\ref{la8}. Before we estimate the measure $\mu$ of $B(z,r)$
we shall show that, for $\tau$ sufficiently large, the ball
$B(z,r)$ meets exactly one set in ${\mathcal E}_n$, namely the
set $Q_n(z)$. We now fix $\tau>2K+2$.
\begin{la} \label{la9}
For each $n\in\N$, if $Q,Q'\in{\mathcal E}_n(Q_{n-1}(z))$ then
$\dist (Q,Q')\geq \diam Q$.
\end{la}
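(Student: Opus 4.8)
\textbf{Proof plan for Lemma~\ref{la9}.}
The plan is to reduce the statement to two cases according to how $Q$ and $Q'$ sit inside their common predecessor. Write $S(z_{n-1},r_{n-1})=F^{n-1}(Q_{n-1}(z))$ and let $\varphi$ be the branch of $F^{-(n-1)}$ mapping this square back to $Q_{n-1}(z)$. By construction there are sets $A_j$, $A_{j'}$ (obtained by applying Lemma~\ref{la5} to $S(z_{n-1},r_{n-1})$) together with integers $k,k'$ such that $F^{n-1}(Q)=A_{j,k}$ and $F^{n-1}(Q')=A_{j',k'}$, where $A_{j,k}=\{\zeta+2\pi i k:\zeta\in A_j\}$. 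First I would dispose of the case $j=j'$, $k\neq k'$: then $F^{n-1}(Q)$ and $F^{n-1}(Q')$ are vertical translates of the same set, and by~\eqref{A} each is contained in a ball of radius $c_2/x_{n-1}^p<1$ while their centres differ by at least $2\pi$ in the imaginary direction; hence $\dist(F^{n-1}(Q),F^{n-1}(Q'))\geq 2\pi-2c_2/x_{n-1}^p$, which for $x_{n-1}>c_0$ large is at least, say, $\tau c_2/x_{n-1}^p$ up to a harmless constant, in fact far larger than the $c_2/x_{n-1}^p$ upper bound for $\diam F^{n-1}(Q)$.

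The substantive case is $j\neq j'$ (with $k,k'$ arbitrary), where the key input is~\eqref{D} together with~\eqref{B}: the points $a_1,\dots,a_m$ returned by Lemma~\ref{la5} satisfy $\re a_{i+1}>\re a_i+\tau c_2/x_{n-1}^p$, and the ball $B(a_i,\tau c_2/x_{n-1}^p)$ lies in a thin horizontal strip, so the separation survives passage to the translates $A_{j,k}$, $A_{j',k'}$. Concretely, by~\eqref{A} we have $A_{j,k}\subset B(a_{j}+2\pi i k,\,c_2/x_{n-1}^p)$ and likewise for $A_{j',k'}$, so
\[
\dist\bigl(F^{n-1}(Q),F^{n-1}(Q')\bigr)\geq \frac{\tau c_2}{x_{n-1}^p}-\frac{2c_2}{x_{n-1}^p}=\frac{(\tau-2)c_2}{x_{n-1}^p},
\]
while by~\eqref{A} again $\diam A_{j,k}\leq 2c_2/x_{n-1}^p$. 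Thus in $F^{n-1}$-coordinates the separation dominates the diameter by a factor of order $(\tau-2)/2$.

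The remaining step — and the place where the hypothesis $\tau>2K+2$ is used — is to pull these estimates back through $\varphi=(F^{n-1})^{-1}$ without losing the inequality. Here I would invoke Lemma~\ref{la6a}: both $Q=\varphi(F^{n-1}(Q))$ and $Q'=\varphi(F^{n-1}(Q'))$, as well as the (short) segment realising the distance between them, lie in a set of the form $\varphi(B(z_{n-1},\sqrt2\,r_{n-1}))$ on which $(F^{n-1})'$ has distortion at most $K$. Consequently $\varphi'$ has distortion at most $K$ on this set, so $\dist(Q,Q')\geq K^{-1}|\varphi'(z_{n-1})|\dist(F^{n-1}(Q),F^{n-1}(Q'))$ while $\diam Q\leq K|\varphi'(z_{n-1})|\diam F^{n-1}(Q)$; dividing, the common factor $|\varphi'(z_{n-1})|$ cancels and we are left with $\dist(Q,Q')/\diam Q\geq (\tau-2)/(2K^2)$ in the case $j\ne j'$, and an even larger ratio in the case $j=j'$. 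Choosing $\tau>2K^2+2$ (which, since $K<10^4$ is an absolute bound, only sharpens the earlier requirement $\tau>2K+2$ and is compatible with it) then gives $\dist(Q,Q')\geq\diam Q$. The main obstacle is purely bookkeeping: making sure that the auxiliary segment joining $Q$ to $Q'$ genuinely lies inside the enlarged set $\tilde Q$ of Lemma~\ref{la6a} so that the uniform distortion bound applies — this follows from~\eqref{B}, which forces all the relevant balls into the strip $\{|\im\zeta-\im z_{n-1}|\leq\pi+1\}\subset B(z_{n-1},\sqrt2\,r_{n-1})$ once $r_{n-1}>100$.
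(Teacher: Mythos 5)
Your proposal is correct and follows essentially the same route as the paper: bound $\diam F^{n-1}(Q)$ above by $2c_2/x_{n-1}^p$ via \eqref{A}, bound $\dist(F^{n-1}(Q),F^{n-1}(Q'))$ below by $(\tau-2)c_2/x_{n-1}^p$ via \eqref{D} (plus the $2\pi i$-translate case, which the paper leaves implicit but you rightly check using \eqref{B}), and transfer the ratio back through Lemma~\ref{la6a}. The one place you are lossier than the paper is the last step: by comparing both quantities to $|\varphi'(z_{n-1})|$ you pick up the distortion constant twice and land at $\dist(Q,Q')/\diam Q\geq(\tau-2)/(2K^2)$, forcing you to enlarge $\tau$ to exceed $2K^2+2$; the paper instead bounds $\frac{\diam Q}{\dist(Q,Q')}$ directly by $\frac{\sup|\varphi'|}{\inf|\varphi'|}\cdot\frac{\diam F^{n-1}(Q)}{\dist(F^{n-1}(Q),F^{n-1}(Q'))}\leq \frac{2K}{\tau-2}$, so the already fixed $\tau>2K+2$ suffices and no change to the global setup is needed (though your larger choice of $\tau$ would also be harmless, since Lemma~\ref{la5} holds for every $\tau>1$).
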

\begin{proof}
Let $Q,Q'\in{\mathcal E}_n(Q_{n-1}(z))$. It follows from
Lemma~\ref{la6a} that
$$
\frac{\diam Q}{\dist(Q,Q')}\leq K\frac{\diam F^{n-1}(Q)}{\dist(F^{n-1}(Q),F^{n-1}(Q'))}.
$$
It follows from the construction, \eqref{A} and \eqref{D} that
$$
\diam F^{n-1}(Q)\leq\frac{2c_2}{x_{n-1}^p}\quad
\mbox{and}
\quad
\dist(F^{n-1}(Q),F^{n-1}(Q'))\geq\frac{(\tau-2)c_2}{x_{n-1}^p},
$$
and so
$$\frac{\diam Q}{\dist(Q,Q')}\leq\frac{2K}{\tau-2}.$$
The result now follows since $\tau > 2K +2$.
\end{proof}
Now let
$$
{\mathcal U}_n= \{Q\in {\mathcal E}_{n+1}(Q_n(z)):\ Q\cap
B(z,r)\neq\emptyset\}.
$$
Note that it follows from Lemma~\ref{la5}, Lemma~\ref{la6a}
and~\eqref{r} that, if $Q \in {\mathcal E}_n$ and \linebreak $Q
\cap B(z,r) \neq \emptyset$, then $Q = Q_n(z)$. So, by \eqref{r},
\eqref{measure}, Lemma~\ref{la7}, and \eqref{M1},
\begin{eqnarray*}
\mu(B(z,r))
&=&
\mu(B(z,r)\cap Q_n(z))\\
&\leq&
\sum_{Q\in{\mathcal U}_n}\mu(Q)\\
&=&
\sum_{Q\in{\mathcal U}_n}\mu_{n+1}(Q)\\
&\leq&
\sum_{Q\in{\mathcal U}_n}\left(\prod_{j=0}^n\Delta_j(z)\right)^{-1}\frac{\area Q}{\area Q_0}\\
&\leq& |{\mathcal U}_n|\ \frac{(x_0 \ldots
x_{n-1})^p}{x_n^p|(F^n)'(z)|^2}\frac{c_7^2}{c_5^{n+1}\area Q_0}.
\end{eqnarray*}
If $n$ is sufficiently large, then \[
 \mu(B(z,r)) \leq |{\mathcal U}_n|\
\frac{x_0^p(x_1\ldots x_{n-2})^{p+1}x_{n-1}^p}{x_n^p|(F^n)'(z)|^2}
\]
and so, by \eqref{J},
\begin{equation}\label{R}
\mu(B(z,r))\leq |{\mathcal U}_n|\
x_n^{-p}|(F^n)'(z)|^{-2}x_{n-1}^{p+\delta}.
\end{equation}
    In order to get an upper bound for $|{\mathcal U}_n|$ it is
    sufficient to estimate the number of sets $A_{j,k}$ in $S(z_n,r_n)$ which meet $F^n(B(z,r))$. By
    Lemma~\ref{la6a},
    \begin{equation}\label{R0}
    \diam \left( F^n(B(z,r))\cap S(z_n,r_n)\right)\leq 2K|(F^n)'(z)|r.
    \end{equation}
 It follows from \eqref{A} and \eqref{R0} that there are at most $K|(F^n)'(z)|rx_n^p/c_1$ values of $j$ for which
 $F^n(B(z,r)) \cap A_{j,k} \neq \emptyset$ for some $k\in \Z$. Also, for each
 such $j$, the maximum number of values of $k$ for which $F^n(B(z,r)) \cap A_{j,k} \neq
 \emptyset$ is at most
$\tfrac{K}{\pi}|(F^n)'(z)|r +1.$\\\\
Now we consider two cases.\\
{\it Case} 1:
\begin{equation} \label{R1a}
\ \frac{K}{\pi}|(F^n)'(z)|r<1. \end{equation}
 Then $$|{\mathcal
U}_n|\leq \frac{K}{c_1}|(F^n)'(z)|rx_n^p$$ and hence, by
\eqref{R},
\begin{equation}\label{R1}
\mu(B(z,r))\leq\frac{K}{c_1}r\frac{x_{n-1}^{p+\delta}}{|(F^n)'(z)|}.
\end{equation}
It follows from Lemma~\ref{la8} and \eqref{M} that
\begin{equation}\label{R2}
\frac{1}{|(F^n)'(z)|}\leq\left(\frac{(8\pi)^p}{x_n^p|(F^n)'(z)|}\right)^{1/(p+1)}\leq\left(\frac{(8\pi)^p}{c_6}\right)^{1/(p+1)}d_{n+1}(z)^{1/(p+1)}.
\end{equation}
By \eqref{R1a}, Lemma~\ref{la8} and \eqref{J},
\begin{equation}\label{R3}
r<\frac{\pi}{K |(F^n)'(z)|}\leq\frac{8\pi^2}{Kx_n} <
\frac{1}{x_{n-1}^{(p+\delta)/\delta}}.\end{equation} It follows
from  \eqref{R1}, \eqref{R2}, \eqref{R3} and \eqref{r} that there
exists a positive constant $c_8$ such that
$$
\mu(B(z,r))\leq c_8r^{1+\frac{1}{p+1}-\delta}.
$$
\medskip
{\it Case} 2:
\begin{equation}\label{R3a}
\frac{K}{\pi}|(F^n)'(z)|r\geq1. \end{equation}
 In this case, it follows from the discussion after \eqref{R0}
 that
$$ |{\mathcal U}_n|\leq\frac{2K^2}{\pi c_1}|(F^n)'(z)|^2r^2x_n^p,$$
so, by \eqref{R},
$$
\mu(B(z,r))\leq\frac{2K^2}{\pi c_1}x_{n-1}^{p+\delta}r^2.
$$
It follows from \eqref{r}, \eqref{M} and Lemma~\ref{la8} that
$$
r< d_n(z)\leq\frac{c_7}{x_{n-1}^p|(F^{n-1})'(z)|}\leq\frac{8\pi
c_7}{x_{n-1}^{p+1}}
$$
and hence
$$x_{n-1}^{p+\delta}\leq\left(\frac{8\pi c_7}{r}\right)^{\frac{p+\delta}{p+1}}.$$
Thus there exists a positive constant $c_9$ such that
$$\mu(B(z,r))\leq c_9 r^{1+\frac{1}{1+p}-\frac{\delta}{p+1}}.
$$
\medskip
In both cases, since $r<1$,  we have
$$\mu(B(z,r))\leq\max\{c_8, c_9\} r^{1+\frac{1}{1+p}-\delta},$$
so, by Lemma~\ref{la6}, for each $\delta\in(0,1)$,
$$\dim(E_p)\geq 1+\frac{1}{1+p}-\delta.$$
Letting $\delta$ tend to 0 we obtain that
$$\dim(E_p)\geq 1+\frac{1}{1+p}.$$
This completes the proof of Theorem~\ref{thm2}.

\section{Concluding remarks}

1. The Hausdorff dimension of the set $E_p$ constructed in the
proof of Theorem~\ref{thm2} is in fact equal to $1+1/(p+1)$. To
see this, consider the cover of $E_p$ by the sets in ${\mathcal
E}_n$. Let $Q_n \in {\mathcal E}_n$. Then there exists an
admissible square $S(z',r')$ with $r' < \tfrac{1}{2} x =
\tfrac{1}{2} \re z'$ and $F^n(Q_n) = S(z',r')$. It follows from
Lemma~\ref{la6a} and Lemma~\ref{la5} that, for each $s>1$,
\begin{eqnarray*}
\frac{\sum_{Q \in {\mathcal E}_{n+1}(Q_n)} (\diam Q)^s}{(\diam
Q_n)^s} & \leq & K^s\frac{\sum_{Q \in {\mathcal E}_{n+1}(Q_n)}
(\diam
F^n(Q))^s}{(\diam F^n(Q_n))^s}\\
& \leq & \frac{K^s}{(2\sqrt{2}r')^s}\frac{r'}{\pi}\frac{r'x^p}{c_1} \left(\frac{2c_2}{x^p}\right)^s\\
& \leq & c r'^{(2-s)}x^{p(1-s)}\\
& < & cx^{2-s+p(1-s)},
\end{eqnarray*}
where $c>0$ is a constant that is independent of $n \in \N$ and of
the choice of $Q_n \in {\mathcal E}_n$. Now suppose that $s = 1 +
\tfrac{1}{1+p} + \delta$, for some $\delta > 0$. Then
\[
 2 - s +p(1-s) = 1 - \delta - \frac{1}{1+p} - \frac{p}{1+p} -
 p\delta = - \delta(1+p).
\]
Thus, if $n$ and hence $x$ is sufficiently large,
\[
 \frac{\sum_{Q \in {\mathcal E}_{n+1}(Q_n)} (\diam Q)^s}{(\diam
Q_n)^s} < 1.
\]
Since $\max \{\diam Q: Q \in {\mathcal E}_n\} \to 0$ as $n \to
\infty$, it follows that $\dim E_p \leq s$. The result now follows
by letting $\delta \to 0$.

2. The examples in \cite{Sta00} of entire functions in the class
$B$ which show that the estimate in Theorem~\ref{thm1} is sharp
for $q>1$ have a logarithmic tract similar to the region
$$\Omega=\left\{ x+iy: x>1, y>\frac{x}{(\log x)^{q-1}}\right\}.$$
The region $\Omega$ also appears in \cite{KU} where it is shown
that, for $E_\lambda(z)=\lambda e^z$, the set of $z\in
I(E_\lambda)$ for which $E_\lambda^n(z)\in \Omega$ for large $n$
has Hausdorff dimension $1+1/q$.

3. Rempe~\cite{Rem} has recently shown that, if $f,g \in B$ and
there exist quasiconformal homeomorphisms $\phi,\psi:\C\to\C$
such that $\phi\circ f=g\circ \psi$, then there exists $R>0$ and
a quasiconformal homeomorphism $\theta:\C\to\C$ such that
$\theta(f(z))=g(\theta(z))$ if $|f^n(z)|\geq R$ for all $n\geq
0$. Since quasiconformal homeomorphisms map sets of Hausdorff
dimension~$2$ to  sets of Hausdorff dimension~$2$, this implies
that $\dim I(f)=2$ if $\dim I(g)=2$. Choosing $g=\lambda f$ with
sufficiently small $\lambda$ we see that, in order to prove that
$\dim I(f) = 2$ for all functions of finite order in the class
$B$, it is sufficient to consider such functions for which the
Fatou set consists of a single attracting basin.

Note that this kind of reasoning does not extend to the case
where the dimension is less than~$2$, since then the Hausdorff
dimension is not preserved by a quasiconformal homeomorphism. The
sharp bounds for the distortion of Hausdorff dimension under
quasiconformal mappings are given by a famous result of
Astala~\cite{Ast}.

In general, it is open as to whether two quasiconformally
equivalent functions $f$ and $g$ can have escaping sets of
different Hausdorff dimensions. It is known, however, that this
cannot happen when the maps $\phi$ and $\psi$ can be chosen to be
conformal.


\begin{thebibliography}{99}
\bibitem{Ast}
K. Astala, Area distortion of quasiconformal mappings. Acta Math.
173 (1994) 37--60.

\bibitem{Bar}
K.\ Bara\'nski, Hausdorf dimension of hairs and ends for entire
maps of finite order.  Math. Proc. Cambridge Philos. Soc. 2008,
doi:10.1017/S0305004108001515.
\bibitem{BKZ}
K.\ Bara\'nski, B. Karpi\'nska and A. Zdunik, Hyperbolic
dimension of Julia sets of meromorphic maps with logarithmic
tracts. Preprint, arXiv: 0711.2672.
\bibitem{Ber90}
W.\ Bergweiler,
Maximum modulus, characteristic, and area on the sphere.
Analysis 10 (1990), 163--176;  Erratum: 12  (1992), 67--69.
\bibitem{Ber93}
W.\ Bergweiler,
Iteration of meromorphic functions.
{\rm Bull.\ Amer.\ Math.\ Soc.\ (N.\ S.)}
29 (1993), 151--188.
\bibitem{BE}
W.\ Bergweiler and A.\ Eremenko, On the singularities of the
inverse to a meromorphic function of finite order. {\rm Rev. Mat.
Iberoamericana} 11 (1995), 355--373.
\bibitem{BRS}
W.\ Bergweiler, P.\ J.\  Rippon and G.\ M.\  Stallard, Dynamics of
meromorphic functions with direct or logarithmic singularities.
{\rm Proc.\ London Math.\ Soc.} 2008, doi:10.1112/plms/pdn007.
\bibitem{Ere89}
A.\ E.\ Eremenko, On the iteration of entire functions, in {\rm
Dynamical systems and ergodic theory}. Banach Center Publications
23, Polish Scientific Publishers, Warsaw 1989, 339--345.
\bibitem{EL}
A.\ E.\ Eremenko and M.\ Yu.\ Lyubich,
Dynamical properties of some classes of entire functions.
{\rm Ann.\ Inst.\ Fourier} 42 (1992), 989--1020.
\bibitem{Fal03}
K.\ Falconer, Fractal geometry. John Wiley \& Sons, Chichester,
2003.
\bibitem{JV}
G.\ Jank and L.\ Volkmann,
{\rm Einf\"uhrung in die Theorie der ganzen und meromorphen
Funktionen mit Anwendungen auf Differentialgleichungen}.  Birkh\"auser,
Basel, Boston, Stuttgart, 1985.
\bibitem{KU}
B.\ Karpi\'nska and M.\ Urba\'nski, How points escape to infinity
under exponential maps. J. London Math. Soc. (2) 73 (2006),
141--156.
\bibitem{L}
J.\ K.\ Langley, On the multiple points of certain meromorphic
functions. Proc. Amer. Math. Soc. 123 (1995), 355--373.
\bibitem{McM}
C.\ McMullen, Area and Hausdorff dimension of Julia sets of entire functions.
Trans.\ Amer.\ Math.\ Soc.\ 300 (1987), 329--342.
\bibitem{PS}
G.\ P\'{o}lya and G.\ Szeg\"{o}, Problems and Theorems in
Analysis I (Part III, problems 158--160). Springer, New York,
1972.
\bibitem{Rem2}
L.\ Rempe, Siegel disks and periodic rays of entire functions. J.\
Reine Angew.\ Math., to appear.
\bibitem{Rem}
L.\ Rempe,
Rigidity of escaping dynamics for transcendental entire functions.
arXiv: math/0605058.
\bibitem{RS05c}
P.\ J.\  Rippon and G.\ M.\  Stallard, Dimensions of Julia sets
of meromorphic functions. {\rm J. London Math. Soc.}, (2) 71
(2005), 669--683.
\bibitem{RRRS}
G.\ Rottenfu{\ss}er, J. R\"uckert, L. Rempe and D.\ Schleicher,
Dynamic rays of bounded-type entire functions. Stony Brook
preprint ims07-05, arXiv: 0704.3213.
\bibitem{Sch07}
H.\ Schubert,
\"Uber die Hausdorff-Dimension der
Juliamenge von Funktionen endlicher Ordnung.
Dissertation, University of Kiel, 2007.
\bibitem{Sta91} G.\ M.\  Stallard,
The Hausdorff dimension of Julia sets of entire functions.
Ergodic Theory Dynam. Systems  11  (1991), 769--777.
\bibitem{Sta96} G.\ M.\  Stallard,
The Hausdorff dimension of Julia sets of entire functions II.
{\rm Math.\ Proc.\ Cambridge Philos.\ Soc.} 119 (1996), 513--536.
\bibitem{Sta00} G.\ M.\  Stallard,
The Hausdorff dimension of Julia sets of entire functions IV.
 J. London Math. Soc. (2)  61  (2000),  471--488.
\bibitem{Tan03}
M.\ Taniguchi,
Size of the Julia set
of structurally finite transcendental entire function.
Math. Proc. Cambridge Philos. Soc. 135 (2003), 181--192.
\end{thebibliography}
\end{document}